\newcommand{\RR}{\mathbb R}
\newcommand{\NN}{\mathbb N}
\newtheorem{theorem}{Theorem}
\newtheorem{lemma}{Lemma}
\newtheorem{remark}{Remark}
\newtheorem{definition}{Definition}
\newtheorem{corollary}{Corollary}
\newtheorem{problem}{Problem}
\begin{document}
\title[Some optimal recovery problems]{Some optimal recovery problems for operators on classes of $L$-space valued functions}
\author{V.~Babenko, V.~Kolesnyk, O.~Kovalenko}
\address[V.~Babenko, O.~Kovalenko]{Department of Mathematical Analysis and Theory of Functions, Oles Honchar Dnipro National University, Dnipro, Ukraine}
\email[V. Babenko]{babenko.vladislav@gmail.com }
\email[O. Kovalenko]{olegkovalenko90@gmail.com}
\address[V.~Kolesnyk]{Department of Mathematics and Computer Science, Drake University, Des Moines, USA}
\email[V. Kolesnyk]{vira.kolesnyk@drake.edu}
\begin{abstract}
We solve three optimal recovery problems for operators on classes of $L$--space (which is a semilinear metric space with two additional axioms that connect the metric with the algebraic operations) valued functions that are defined by a majorant of their modulus of continuity. Consideration of $L$--spaces valued functions allows to treat multi- and fuzzy-valued functions, as well as random processes and other non-real valued functions in a unified manner.
\end{abstract}

\keywords{Optimal recovery, semilinear metric space, modulus of continuity, cubature formula}
\subjclass[2020]{41A55, 41A65, 41A17, 41A44}

\maketitle
\section{Introduction} 
Let a metric space $(X, h_X)$, sets  $Z$, $Y$, a class of elements $W\subset Z$, as well as mappings $\Lambda \colon Z\to X$ and $I\colon W\to Y$  be given. We call an arbitrary mapping $\Phi\colon Y\to X$ a method of recovery of the mapping $\Lambda$ on the class $W$ based on the information given by the mapping $I$. The error of recovery of the mapping $\Lambda$ on the class $W$ by the method $\Phi$ based on the information given by the mapping $I$ is given by the formula
$$
{\mathcal E}(\Lambda,W,I,\Phi, X)={\sup\nolimits_{w\in W}h_X(\Lambda(w), \Phi(I(w)))}.
$$
The quantity 
\begin{equation}\label{errorOfRecovery}
    {\mathcal E}(\Lambda,W, I, X)=\inf\nolimits_{\Phi} {\mathcal E}(\Lambda,W,I,\Phi, X)
\end{equation}
is called the optimal error of recovery of the mapping $\Lambda$ on the class $W$ based on the information given by the mapping $I$.
The problem of optimal recovery  of the mapping $\Lambda$ on the class  $W$ with the information given by $I$ in the metric of the space $X$ is to find quantity~\eqref{errorOfRecovery}
and a method $\Phi^*$ (if such a method exists) on which the infimum on the right-hand side of~\eqref{errorOfRecovery} is attained.

In this article we solve three problems of optimal recovery on the classes $W = H^\omega(T,X)$ of functions $f\colon T\to X$ that take values in an $L$-space $X$ (i.e., a semilinear metric space with two additional axioms that connect the metric with the algebraic operations) and have a given majorant $\omega$ of their modulus of continuity; precise definitions will be given in the following sections.

Results on optimal recovery on classes $H^\omega(T,X)$ with $X = \RR$ can be found in~\cite{Korneichuk68,Motornyi,Sukharev, Babenko76, Babenko77, Babenko95,Chernaya1, Chernaya2}; some cases with $X\neq \RR$ can be found in~\cite{Drozhzhina,Babenko16a,BBPS,Kovalenko20}. In papers~\cite{Babenko19a,VeraBabenko_JANO,Babenko20,Babenko21,Babenko22,Babenko23} questions of optimal recovery of operators
acting on $L$-space-valued functions were considered.

The first of the three problems mentioned above is as follows:
\begin{problem}
Solve the problem
of optimal recovery of the operator
\begin{equation}\label{integralLambda}
\Lambda f = \int_Qf(s)d\mu(s)
\end{equation}
on the class $H^\omega(T,X)$ based on the information operator
\begin{equation}\label{informationOperator1}
I(f) = (P(f(x_1)),\ldots, P(f(x_n))),
\end{equation}
where $P$ is the convexifying operator that is used in the definition of the Lebesgue integral for an $L$-space-valued functions, see Section~\ref{s::integral}.
\end{problem}
This result generalizes~\cite{Lebed68, Korneichuk68,Babenko16a}.

Let $X$ be an $L$-space, and $T$ be a measurable space. We say that a function $f\colon T\to X$ is measurable, if for each $x\in X$, the real-valued function $h_X(f(\cdot), x)$ is measurable. 
By $ M(T,X)$ we denote the space of all measurable functions  $f\colon T\to X$.

We say that a function $f\in M(T,X)$ is bounded, if for some $C > 0$ and $x\in X$ one has  $h_X(f(t), x)<C$ for all  $t\in T$. We denote by $B(T,X)$ the set of all  bounded functions $f\in M(T,X)$. In the space $B(T,X)$ one can introduce the uniform metric $$h_{B(T,X)}(f,g) = \sup_{t\in T} h_X(f(t),g(t)).$$

Assume $Y$ is also an $L$-space, $\lambda\colon X\to Y$ is a positively homogeneous Lipschitz mapping and $\varphi\colon B(T,\RR)\to Y$ is a monotone mapping (in the sense that if $0\leq f(t)\leq g(t)$ for all $t\in T$, then $\varphi(f)\leq \varphi(g)$). An operator $\Lambda\colon B(T,X)\to Y$ is said to be of $(\lambda,\varphi)$-type, if the following properties hold:
\begin{equation}\label{lambdaPhiProp1}  
h_Y(\Lambda f,\Lambda g)\le \varphi(h_Y(\lambda\circ f,\lambda\circ g)),
\end{equation}
and for any non-negative function $u\in B(T,\RR)$, and element $x\in X$
\begin{equation}\label{lambdaPhiProp2}  
\Lambda (u(\cdot)x)=\varphi (u(\cdot))\lambda(x).
\end{equation}
The set $\mathcal{L}(\lambda,\varphi)$ of all $(\lambda,\varphi)$-type operators is a subset of more general classes of operators that satisfy analogues of properties~\eqref{lambdaPhiProp1} and~\eqref{lambdaPhiProp2}, which were considered in~\cite{Babenko21} and~\cite{Babenko24AbstractOstrowski}. As it will be easily seen later, integral operator~\eqref{integralLambda} is of $(\lambda,\varphi)$ type, where $\lambda$ is the convexifying operator, and $\varphi$ is the integral of real-valued functions.

The second problem is as follows.
\begin{problem}
Solve the optimal recovery problem for the operator
\begin{equation}\label{valueOperator}
   \Lambda(f) = \lambda(f(t)),
\end{equation}   
$t\in T$ is fixed, on the class $H^\omega(T,X)$ based on the information operator 
\begin{equation}\label{generalInformationalSet}
   I(f) 
   = 
   \left(\frac{\Lambda_1(f\cdot \chi_{1})}{\varphi(\chi_{1})}, \ldots, \frac{\Lambda_n(f\cdot \chi_{n})}{\varphi(\chi_{n})} \right), 
\end{equation}
where $\Lambda_i\in \mathcal{L}(\lambda,\varphi)$, $i=1,\ldots, n$, and $\chi_1, \ldots, \chi_n\in B(T,\RR)$ are non-negative functions such that the denominators are non-zero.
\end{problem}

In particular, if each of $\Lambda_i$ is integral operator~\eqref{integralLambda} and each $\chi_i$ is the characteristic function of a set $Q_i\subset T$, then the recovery is done based on the mean values of $f$ on the sets $Q_i$, $i=1,\ldots, n$. For optimal recovery of the integral for real-valued functions based on such type of information see~\cite{BabenkoSkorokhodov,Babenko08} and references therein. The recovery of the integral was also considered based on known mean values of the function of $d\geq 2$ variables over surfaces of dimension less than $d$; see e.g.~\cite{Babenko11a,Babenko11b,Babenko11c} and references therein.

% {\color{blue} Те, що я можу зразу пригадати з цього приводу, це результати по оптимізації так званих інтервальних квадратурних формул на різноманітних класах числовтих функцій. Посилання треба шукати. Також результати пр оптимізації таких формул на деяких класах многозначних функцій. Крім того є ряд результатів по оптимальному відновленню по усередненій інфоимації на коасах функцій багатьох змінних. Мабуть є смисл їх згадати. }{\color{red} Я надаслав тобі на пошту декілька файлів. Їх і посилання в них монжа використатм}

Finally, in the case, when $T\subset \RR^d$, $d\in\NN$, is a convex compact set, $B_1,\ldots, B_n$ are closed balls with small enough radius $\varepsilon>0$, and $\mu$ is the Lebesgue measure, we solve the following problem. 
\begin{problem}
Solve the problem of optimal recovery of the convexifying operator $P$ in the uniform metric, based on the information \begin{equation}\label{meanValuesInformation}
    I(f) 
    = 
   \left(\frac{1}{\mu(B_1)}\int_{B_1}f(s)d\mu(s), \ldots, \frac{1}{\mu(B_n)}\int_{B_n}f(s)d\mu(s) \right).
\end{equation}
\end{problem}
In the case $d = 1$ a solution to this optimal recovery problem is contained in~\cite{Babenko23}. 
To the best of our knowledge, this result with $d>1$ is new even for the case of $X= \RR$.

The article is organized as follows. In Section~\ref{s::LSpace} we give necessary definitions and facts related to the notion of an $L$-space. In Section~\ref{s::ostrowskiInequality} we prove several theorems, which on the one hand are key ingredients to prove the main results about the optimal recovery problems, and on the other hand seem to be interesting on their own. Finally, Section~\ref{s::optimalRecoveryProblems} contains the solutions for the announced above optimal recovery problems.

\section{$L$-spaces}\label{s::LSpace}
\subsection{Definitions}

\begin{definition}
A set $X$ is called a semilinear space, if  operations of addition of elements and their multiplication on real numbers are defined in $X$, and the following conditions are satisfied for all $x,y,z\in X$ and $\alpha,\beta\in\mathbb{R}$:
\begin{gather*}
x+y=y+x;
\\ x+(y+z)=(x+y)+z;
\\ \exists\; \theta\in X\colon x+\theta=x;
\\ \alpha(x+y)=\alpha x +\alpha y;
\\ \alpha(\beta x)=\left(\alpha\beta\right)x;
\\ 1\cdot x=x,\; 0\cdot x=\theta.
\end{gather*}
\end{definition}

\begin{definition}
We call an element $x\in X$ convex, if for all $\alpha,\beta\geq 0$, $(\alpha + \beta)x=\alpha x+ \beta x.$
Denote by $ X^{\rm c}$ the subspace of all convex elements of the space $X$.
\end{definition}
\begin{remark}
Some authors (see e.g.~\cite{Borisovich}) include into the axioms of a semi-linear space the requirement $X=X^{\rm c}$.
\end{remark}

\begin{definition}
A  semilinear space $X$ endowed with a metric $h_X$ is called an $L$-space, if it is complete, separable, and for all $x,y,z\in X$, and $\alpha\in\mathbb{R}$
$$
h_X(\alpha x,\alpha y)=|\alpha|h_X(x,y);
$$
\begin{equation}\label{ax::LSemiIsotropic}
h_X(x+z,y+z)\leq h_X(x,y).
\end{equation}
\end{definition}

\begin{remark}
It follows from the triangle inequality and~\eqref{ax::LSemiIsotropic} that 
\begin{equation}\label{distBetweenSums}
   \forall x,y,z,w\in X\;\;\; h_X(x+z,y+w)\leq h_X(x,y)+h_X(z,w).   
\end{equation}
\end{remark}

\begin{definition}
An $L$-space $X$ is called isotropic, if inequality~\eqref{ax::LSemiIsotropic} turns into equality for all $x,y,z\in X$.
\end{definition}

Next we list some of the examples of $L$-spaces. More examples can be found in~\cite{Babenko19a, Babenko20, Babenko21}. 
Arbitrary separable Banach space and arbitrary complete and separable quasilinear normed space (see~\cite{Aseev}) are $L$-spaces. The space  $\Omega(X)$ of non-empty compact subsets of a separable Banach space $X$ endowed with the usual Hausdorff metric, the space $\Omega_{\rm conv}(X)$ of convex elements from $\Omega(X)$, and spaces of fuzzy sets (see e.~g.~\cite{Diamond2}) are also examples of $L$-spaces. All $L$-spaces mentioned above are isotropic.

An example of a non-isotropic $L$-space can be built as follows. Let $X = [0,\infty)$, for $\lambda\in\RR$, $x,y\in X$ set $x\oplus y = \max\{x,y\}$, $\lambda\odot x = |\lambda|x$. Then $(X,\oplus,\odot)$ with the metric $h_X(x,y) = |x-y|$, $x,y\in X$, is a non-isotropic $L$-space.

\subsection{Approximation of Measurable Functions by Simple Ones}
Let $\left(T,\mu\right) = \left(T,\mathcal{F},\mu\right)$ be a measurable space with a complete finite measure $\mu$ and $(X, h_X)$ be a separable metric space. 
  From~\cite[Theorem~I.4.20]{Varga} it follows that $f\in M(T,X)$ if and only if for each open set $A\subset X$ one has $f^{-1}(A)\in\mathcal{F}$. Moreover, for two functions $f,g\in M(T,X)$, the function $t\mapsto h_X(f(t),g(t))$ is measurable according to~\cite[Theorem~I.4.22]{Varga}.

The following variant of Egorov's theorem is true, see e.g.~\cite[Theorem~I.4.18]{Varga}.
\begin{theorem}\label{th::Egorov}
    If a sequence of functions $f_i\in M(T,X)$ converges to  $f\colon T\to X$ almost everywhere as $i\to\infty$, then $f\in M(T,X)$. Moreover, for each  $\varepsilon>0$ there exists a set $T_\varepsilon\in \mathcal{F}$ such that  $\mu(T\setminus T_\varepsilon) < \varepsilon$ and $f_i$ converges to  $f$ uniformly on $T_\varepsilon$ as $i\to\infty$.  
\end{theorem}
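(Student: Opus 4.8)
The plan is to prove the two assertions in turn: first that the almost-everywhere limit $f$ is again measurable, and then the Egorov-type statement on uniform convergence off an arbitrarily small set. Both are essentially the classical arguments, adapted to the fact that here ``measurable'' means, by definition, that $t\mapsto h_X(f(t),x)$ is a measurable real-valued function for every fixed $x\in X$.

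For the first part, fix $x\in X$. Since $f_i(t)\to f(t)$ for almost every $t$ and $h_X(\cdot,x)$ is continuous, we have $h_X(f_i(t),x)\to h_X(f(t),x)$ for almost every $t$. Each function $t\mapsto h_X(f_i(t),x)$ is measurable because $f_i\in M(T,X)$, and a pointwise almost-everywhere limit of real-valued measurable functions is measurable since $\mu$ is complete. Hence $t\mapsto h_X(f(t),x)$ is measurable, and as $x$ was arbitrary, $f\in M(T,X)$.

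For the second part, note that now $f,f_i\in M(T,X)$, so by the measurability of $t\mapsto h_X(f(t),g(t))$ for measurable $f,g$ (recorded above from~\cite[Theorem~I.4.22]{Varga}) the sets
\[
E_{n,k}:=\bigcup_{i\ge k}\bigl\{t\in T:\ h_X(f_i(t),f(t))\ge \tfrac1n\bigr\},\qquad n,k\in\NN,
\]
belong to $\mathcal F$. For fixed $n$ the sets $E_{n,k}$ decrease in $k$, and $\bigcap_{k}E_{n,k}$ is contained in the null set on which $f_i$ fails to converge to $f$; since $\mu$ is finite, continuity from above gives $\mu(E_{n,k})\to 0$ as $k\to\infty$. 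Choose $k_n$ with $\mu(E_{n,k_n})<\varepsilon\,2^{-n-1}$ and set $T_\varepsilon:=T\setminus\bigcup_{n\ge 1}E_{n,k_n}$. Then $\mu(T\setminus T_\varepsilon)\le\sum_{n\ge1}\varepsilon\,2^{-n-1}<\varepsilon$, and for every $n$ and every $t\in T_\varepsilon$ one has $h_X(f_i(t),f(t))<\tfrac1n$ for all $i\ge k_n$; this is precisely uniform convergence of $f_i$ to $f$ on $T_\varepsilon$.

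There is no serious obstacle: the only points needing care are (i) reducing the measurability of the limit to the scalar case via the definition of $M(T,X)$ together with completeness of $\mu$, and (ii) having the sets $E_{n,k}$ measurable and the measure finite, so that continuity from above applies. Both are supplied by the facts quoted just before the statement, so the abstract $L$-space (in fact, arbitrary separable metric space) setting costs essentially nothing beyond the classical proof.
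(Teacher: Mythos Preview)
The paper does not supply its own proof of this statement: it is simply stated with a reference to \cite[Theorem~I.4.18]{Varga}. Your argument is correct and is precisely the classical proof, carried out through the paper's definition of $M(T,X)$: you reduce the measurability of the limit to the scalar case using completeness of $\mu$, and for the Egorov part you use the measurability of $t\mapsto h_X(f_i(t),f(t))$ (quoted from \cite[Theorem~I.4.22]{Varga} just before the theorem) together with finiteness of $\mu$ to run the standard $E_{n,k}$ argument. There is nothing to compare against, and no gap to point out.
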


\begin{definition}
A mapping $f\colon T\to X$ is called simple, if it has a finite number of values $\left\{f_k\right\}_{k=1}^n$ on pairwise disjoint measurable sets $\left\{T_k\right\}_{k=1}^n$, $n\in\NN$.
\end{definition}

The following lemma holds.
\begin{lemma}\label{l::approximationOfMeasurableFunctions} For each  $f\in M(T,X)$ there exists a sequence of simple functions that converges to $f$ almost everywhere. If  $f$ is bounded, then the sequence of simple functions can be chosen to be uniformly bounded.    
\end{lemma}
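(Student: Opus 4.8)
The plan is to build the approximating sequence by discretizing the target space rather than the domain. Fix $f\in M(T,X)$. Since $X$ is separable, choose a countable dense set $\{x_j\}_{j=1}^\infty\subset X$. For a fixed $m\in\NN$, the open balls $B(x_j,1/m)$ cover $X$, hence the sets $A_{m,j}=f^{-1}\bigl(B(x_j,1/m)\bigr)$ cover $T$; by the characterization of measurability recalled before Theorem~\ref{th::Egorov} (preimages of open sets are measurable), each $A_{m,j}\in\mathcal F$. Disjointify them in the usual way, setting $T_{m,j}=A_{m,j}\setminus\bigcup_{i<j}A_{m,i}$, so that $\{T_{m,j}\}_{j\ge1}$ is a measurable partition of $T$. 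Define $g_m\colon T\to X$ by $g_m(t)=x_j$ whenever $t\in T_{m,j}$. By construction $h_X(g_m(t),f(t))<1/m$ for every $t\in T$, so $g_m\to f$ uniformly on all of $T$ (in particular a.e.). The only defect is that $g_m$ has countably, not finitely, many values.

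The second step is to truncate to finitely many values. Since $\mu$ is finite and $\{T_{m,j}\}_j$ partitions $T$, we have $\sum_{j\ge1}\mu(T_{m,j})=\mu(T)<\infty$, so there is $N_m\in\NN$ with $\mu\bigl(\bigcup_{j>N_m}T_{m,j}\bigr)<2^{-m}$. Redefine $f_m$ to equal $g_m$ on $\bigcup_{j\le N_m}T_{m,j}$ and to equal the fixed value $x_1$ (say) on the remainder $E_m:=\bigcup_{j>N_m}T_{m,j}$. Then $f_m$ is simple. On $T\setminus E_m$ we still have $h_X(f_m(t),f(t))<1/m$. Since $\sum_m\mu(E_m)<\infty$, the Borel–Cantelli lemma gives $\mu\bigl(\limsup_m E_m\bigr)=0$; hence for a.e.\ $t$ we have $t\notin E_m$ for all large $m$, and therefore $f_m(t)\to f(t)$. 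This proves the first assertion.

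For the bounded case, suppose $h_X(f(t),x)<C$ for all $t$ and some fixed $x\in X$, $C>0$. Running the construction above yields simple functions $f_m$ with $h_X(f_m(t),f(t))<1/m$ off a set $E_m$ and $f_m=x_1$ on $E_m$. Thus off $E_m$ one has $h_X(f_m(t),x)\le h_X(f_m(t),f(t))+h_X(f(t),x)<1/m+C\le 1+C$, while on $E_m$ one has $h_X(f_m(t),x)=h_X(x_1,x)=:C'$; so $f_m$ is bounded by $\max\{1+C,C'\}$ uniformly in $m$ and in $t$, i.e.\ the sequence is uniformly bounded, and it still converges to $f$ a.e.\ by the argument of the previous paragraph.

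I do not expect any serious obstacle here: the argument is the standard separable-range approximation, and the two axioms of an $L$-space play no role. The only point requiring a little care is the transition from countably many values to finitely many while preserving a.e.\ convergence, which is handled cleanly by the summability $\sum_m\mu(E_m)<\infty$ and Borel–Cantelli; an alternative is to choose the $T_{m,j}$ nested in $m$ so that the exceptional sets decrease, but the Borel–Cantelli route is shorter.
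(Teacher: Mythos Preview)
Your argument is correct and follows essentially the same route as the paper: pick a countable dense set, form the disjointified preimages of $1/m$-balls, truncate to finitely many pieces using $\mu(T)<\infty$ with exceptional sets of measure $<2^{-m}$, and deduce a.e.\ convergence (the paper argues directly with $C_m=\bigcup_{s\ge m}E_s$ and $\mu(C_m)\to 0$, which is exactly your Borel--Cantelli step). The bounded case is handled identically in both proofs.
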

\begin{proof}
    Let $\{x_1,\ldots, x_n,\ldots\}$ be a countable dense in $X$ set. For each  $k\in \NN$ set 
    $A_{k,1} = \{t\in T\colon h_X(f(t), x_1)< 1/k\}$,  and for each  $i\geq 2$, $A_{k,i} = \{t\in T\colon h_X(f(t), x_i)< 1/k\}\setminus \bigcup_{s=1}^{i-1} A_{k,s}$. Then each of the sets $A_{k,i}$ is measurable, and for all  $k\in \NN$ the sets $A_{k,i}$, $i\in\NN$, constitute a partition of the set  $T$. Since for each  $k\in \NN$, $\sum_{i=1}^\infty \mu(A_{k,i})  =\mu (T) <\infty$, there exists  $N(k)\in\NN$ such that $\mu \left(B_k\right) < 2^{-k}$, where $B_k = \bigcup_{i=N(k)+1}^\infty A_{k,i}$. Set 
    $$f_k(t) = 
    \begin{cases}
    x_i, & t\in A_{k,i}, i \leq N(k),\\
    x_1, & t\in B_k.
    \end{cases}
    $$ 
    Then for each  $t\in T\setminus B_k$ one has $h_X(f(t), f_k(t))\leq 1/k$, $k\in\NN$. Set  $C_m = \bigcup_{s=m}^\infty B_s$, $m\in\NN$. For arbitrary  $k\in\NN$, all  $s\geq k$ and all  $t\in T\setminus C_k$ one has  $h_X(f(t), f_s(t))\leq 1/s$; this implies that the sequence of simple functions  $f_s$, $s\in\NN$, converges uniformly to  $f$ on the set $T\setminus C_k$. Since   $C_{m+1}\subset C_m$ for all  $m\in\NN$ and $\mu(C_m)\to 0$ as  $m\to\infty$, we obtain that the sequence $f_s$ converges to  $f$ almost everywhere and the first statement of the lemma is proved.

    If for some  $x\in X$ one has $C:=\sup_{t\in T}h_X(f(t),x)<\infty$, then for the considered sequence  $f_k$, $k\in\NN$, and arbitrary  $t\in T\setminus B_k$ one has
    $$
    h_X(f_k(t), x)\leq     h_X(f_k(t), f(t)) + h_X(f(t), x)\leq 1 +C.
    $$
    On the other hand, if  $t\in B_k$, then 
    $h_X(f_k(t), x)= h_X(x_1, x)$. Thus the estimate from above for the distance  $h_X(f_k(t), x)$ does not depend on  $k$ for all  $t\in T$, hence the sequence of functions  $f_i$ is uniformly bounded.
\end{proof}

\subsection{Integration in $L$-spaces}\label{s::integral}
Let $X$ and $Y$ be $L$-spaces.
\begin{definition}
An operator $\lambda \colon X\to Y$ is called Lipschitz if for all $x,y\in X$
$$
     h_Y(\lambda(x),\lambda(y))\le h_X(x,y).
$$
\end{definition}
\begin{definition}  (See \cite{Vahrameev,Aseev})
A surjective operator $P\colon X\to X^{\rm c}$ is called a convexifying operator, if it is Lipschitz and 
\begin{enumerate}
\item $P\circ P= P$;
\item $\forall x,y\in X$ and $\forall \alpha,\beta\in\RR$, $P(\alpha x + \beta y) = \alpha P(x)+ \beta P(y)$.
\end{enumerate}
\end{definition}

For example, the operator  ${\rm co}\colon\Omega(\mathbb{R}^m)\to \Omega(\mathbb{R}^m)$ that maps each $x\in \Omega(\mathbb{R}^m)$ to its convex hull ${\rm co}\,x$ is a convexifying operator.

\begin{remark}\label{r::P(x)}
From the definition of a convexifying operator it follows that  $P(x) = x$ for all $x\in X^{\rm c}$.
\end{remark}
Indeed, if $x\in X^{\rm c}$ and $y\in X$ is such that $P(y) = x$, then $P(x) = P(P(y)) = P(y) = x$.

For completeness, we present the definition and some of the properties of the Lebesgue integral for the functions $f\in B(T,X)$, where $T$ is a metric compact, see~\cite{Vahrameev} and~\cite[\S 5]{Aseev}. 

 The Lebesgue integral of a simple mapping $f\in B(T,X)$ with respect to a  Borel measure $\mu$ is by definition
$$
\int_T f(s)d\mu(t) :=\sum_{i=1}^n P(f_i)\mu(T_i).
$$

The following properties hold for all simple functions $f,g\in B(T,X)$.
\begin{enumerate}
\item For all $\alpha,\beta\in \RR$
\begin{equation}\nonumber
\int_T \left(\alpha f(t)+\beta g(t)\right) d\mu(t)=\alpha \int_T f(t) d\mu(t)+\beta \int_T g(t) d\mu(t).
\end{equation}

\item The function $t\mapsto h_X \left(f(t), g(t)\right)$ is integrable and
 $$
    h_X\left(\int_T f(t) d\mu(t), \int_T g(t) d\mu(t)\right)\leq \int_T h_X \left(f(t),g(t)\right)d\mu(t).
$$

\item The function $P(f(\cdot))$ is integrable and
\begin{equation}\nonumber
\int_T f(t) d\mu(t) = P\left( \int_T f(t) d\mu(t)\right) = \int_T P(f(t)) d\mu(t).
\end{equation}
\item For disjoint measurable sets $T_1$ and $T_2$ such that $T=T_1\cup T_2$
$$
\int_T f(t) d\mu(t)=\int_{T_1} f(t) d\mu(t)+\int_{T_2} f(t) d\mu(t).
$$

\end{enumerate}
By Lemma~\ref{l::approximationOfMeasurableFunctions}, any function $f\in B(T,X)$ is an almost everywhere  limit of a uniformly bounded sequence $\{f^k\}$ of simple functions.  Using Theorem~\ref{th::Egorov} and standard arguments, one can prove that the sequence $\left\{\int_Tf^k(t)d\mu(t)\right\}_{k\in\NN}$ is fundamental. By definition, the integral $\int_Tf(t)d\mu(t)$ of the function $f$ is set to be the limit of this sequence; the fact that the limit does not depend on the choice of the sequence $\{f^k\}$ is easy to verify.
 It is clear that properties 1-4 of the Lebesgue integral for simple functions hold for arbitrary functions from $B(T,X)$.
 
 Note that in the case of $X$ being a Banach space, the integral defined above becomes the Bochner integral, see~ \cite[Sections~3.7--3.8]{hille}; in the case $X=\Omega( {\RR^m})$, the integral coincides with the Aumann integral, see~\cite[Theorem~12]{Aseev}.
 
 \subsection{Some Properties of $L$-spaces}
\begin{definition}
We say that an element $x\in X$ is invertible, if there exists an element $x'\in X$ such that $x+x'=\theta$. In this case the element $x'$ is called the inverse to $x$. Denote by $X^{\rm inv}$ the set of all invertible elements of the space $X$. 
\end{definition}
It is easy to see that an inverse element is unique whenever it exists.

In the space $\Omega(X)$ any element of the form $\{x\},$ $x\in X$, is convex and invertible. 

Remark~\ref{r::P(x)} immediately implies the following lemma.
\begin{lemma}\label{l::convexifyingOperator}
If $P$ is a convexifying operator in an isotropic $L$-space $X$, then $P(\theta) = \theta$. If $x\in X^{{\rm inv}}$, then $P(x)\in X^{\rm inv}$ and $P(x')=(P(x))'$.
\end{lemma}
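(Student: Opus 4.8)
The plan is to obtain both assertions directly from the algebraic axioms of a semilinear space together with Remark~\ref{r::P(x)}, so that essentially no work with the metric is needed.

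First I would check that $\theta$ is a convex element. For an arbitrary $x\in X$ and any $\alpha\in\RR$ the semilinear axioms give $\alpha\theta=\alpha(0\cdot x)=(\alpha\cdot 0)x=0\cdot x=\theta$; consequently $(\alpha+\beta)\theta=\theta=\theta+\theta=\alpha\theta+\beta\theta$ for all $\alpha,\beta\ge 0$, that is, $\theta\in X^{\rm c}$. By Remark~\ref{r::P(x)} this yields $P(\theta)=\theta$.

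For the second part I would fix $x\in X^{\rm inv}$ and let $x'$ denote its (unique) inverse, so that $x+x'=\theta$. Applying $P$ and using property~(2) in the definition of a convexifying operator with $\alpha=\beta=1$ (which makes $P$ additive), I get $P(x)+P(x')=P(x+x')=P(\theta)=\theta$. Hence $P(x)$ is invertible, and by uniqueness of the inverse $P(x')=(P(x))'$.

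The whole argument is essentially a one-line computation; the only points that need to be stated explicitly are that $\theta\in X^{\rm c}$, so that Remark~\ref{r::P(x)} is applicable, and that the scalar-linearity of $P$ in particular forces $P(x+x')=P(x)+P(x')$. I do not anticipate any genuine obstacle.
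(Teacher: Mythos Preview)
Your argument is correct and is exactly the route the paper has in mind: the paper offers no proof beyond the sentence ``Remark~\ref{r::P(x)} immediately implies the following lemma,'' and your write-up simply unpacks that implication---first checking $\theta\in X^{\rm c}$ so that Remark~\ref{r::P(x)} gives $P(\theta)=\theta$, and then using the additivity built into property~(2) of a convexifying operator together with uniqueness of inverses. You are also right that the isotropy hypothesis plays no role here.
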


We also  need the following lemmas, see e.g.~\cite{VeraBabenko_JANO}.  We give their proofs for completeness.
\begin{lemma}\label{l::inverseIsConvex}
If $x\in X^{\rm inv}\cap X^{\rm c}$, then $x'\in X^{\rm inv}\cap X^{\rm c}$. 
\end{lemma}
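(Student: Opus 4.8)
The plan is to argue directly from the definitions of invertibility and convexity, exploiting the fact that the inverse element, when it exists, is unique (as noted just before Lemma~\ref{l::convexifyingOperator}). Suppose $x\in X^{\rm inv}\cap X^{\rm c}$ and let $x'$ denote its inverse, so $x+x'=\theta$. First I would observe that $x'\in X^{\rm inv}$ is immediate: the relation $x'+x=\theta$ exhibits $x$ as an inverse of $x'$, so $(x')'=x$. The substance of the lemma is therefore to show $x'\in X^{\rm c}$, i.e. that $(\alpha+\beta)x'=\alpha x'+\beta x'$ for all $\alpha,\beta\ge 0$.

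For this I would fix $\alpha,\beta\ge 0$ and compute, using the semilinear space axioms together with convexity of $x$. The key step is to add $(\alpha+\beta)x$ to $\alpha x'+\beta x'$ and simplify: since $x$ is convex, $(\alpha+\beta)x=\alpha x+\beta x$, and then commutativity and associativity of addition give
$$
\bigl(\alpha x'+\beta x'\bigr)+(\alpha+\beta)x=\bigl(\alpha x'+\beta x'\bigr)+\bigl(\alpha x+\beta x\bigr)=\alpha(x'+x)+\beta(x'+x)=\alpha\theta+\beta\theta=\theta.
$$
Here I used the distributive law $\alpha(x+y)=\alpha x+\alpha y$ in the form $\alpha x'+\alpha x=\alpha(x'+x)$, and $\gamma\theta=\theta$ for any scalar $\gamma$ (which follows from $0\cdot x=\theta$ and $\gamma\theta=\gamma(0\cdot x)=(\gamma\cdot 0)x=0\cdot x=\theta$). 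On the other hand, $(\alpha+\beta)x'+(\alpha+\beta)x=(\alpha+\beta)(x'+x)=(\alpha+\beta)\theta=\theta$. Thus both $\alpha x'+\beta x'$ and $(\alpha+\beta)x'$ are inverse to the element $(\alpha+\beta)x$, and by uniqueness of the inverse they coincide. Since $\alpha,\beta\ge 0$ were arbitrary, $x'$ is convex, completing the proof.

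The only place requiring a little care — and the main (mild) obstacle — is making sure every manipulation is licensed by the \emph{semilinear} axioms alone, since there is no subtraction available: one cannot "cancel" $(\alpha+\beta)x$ directly, which is precisely why the argument is routed through the uniqueness-of-inverse observation rather than algebraic cancellation. No use of the metric axioms or of isotropy is needed.
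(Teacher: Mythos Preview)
Your proof is correct and is essentially the same as the paper's: both arguments show that $(\alpha+\beta)x'$ and $\alpha x'+\beta x'$ are each inverse to $(\alpha+\beta)x=\alpha x+\beta x$ (using the convexity of $x$), and then invoke uniqueness of the inverse. The only cosmetic difference is that the paper records this via the intermediate identity $(\alpha+\beta)x'=(\alpha x+\beta x)'=\alpha x'+\beta x'$, whereas you compare both elements directly against $(\alpha+\beta)x$.
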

\begin{proof}
The implication  $x\in X^{\rm inv}\implies x'\in  X^{\rm inv}$ is obvious. We prove that $x\in X^{\rm inv}\cap X^{\rm c}\implies x'\in X^{\rm c}$.  Let $\alpha, \beta\geq 0$.
\begin{multline*}
\theta 
=
0\cdot \theta
=
((\alpha+\beta)\cdot 0)\cdot \theta
=
(\alpha+\beta)\cdot( 0\cdot \theta)
=
(\alpha+\beta)\theta 
\\= 
(\alpha+\beta)(x+x')
=
(\alpha+\beta)x+(\alpha+\beta)x'
=\alpha x+\beta x+(\alpha+\beta)x'
\end{multline*}
i.e.,  $(\alpha+\beta)x'=(\alpha x+\beta x)'$. On the other hand,
$(\alpha x +\beta x)'=\alpha x'+\beta x'$, since
$$
\alpha x+\beta x+\alpha x'+\beta x'
=
\alpha(x+x')+\beta(x+x')
=
\theta.
$$
Due to uniqueness of the inverse element, we obtain
$\alpha x'+\beta x'=(\alpha+\beta)x'$
i.e.,  $x'\in X^{\rm c}$. 
\end{proof}
\begin{lemma}\label{l::distBetweenInverseElems} 
For any $ x \in X^{\rm inv}\cap X^c$, one has
$
h_X(x,x')=2h_X(x,\theta).
$
\end{lemma}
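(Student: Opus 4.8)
The plan is to establish the two inequalities $h_X(x,x')\le 2h_X(x,\theta)$ and $h_X(x,x')\ge 2h_X(x,\theta)$ separately; only the second one will require that $x$ is convex. A preliminary observation, valid for every invertible $x$ and not needing convexity, is that $h_X(x,\theta)=h_X(x',\theta)$: since $x=\theta+x$ and $\theta=x'+x$ (using $x+x'=\theta$, commutativity and the identity axiom), inequality~\eqref{ax::LSemiIsotropic} with common summand $x$ gives $h_X(x,\theta)=h_X(\theta+x,x'+x)\le h_X(\theta,x')$, and the reverse inequality follows symmetrically from $x'=\theta+x'$, $\theta=x+x'$.

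Given this, the upper bound is immediate from the triangle inequality: $h_X(x,x')\le h_X(x,\theta)+h_X(\theta,x')=2h_X(x,\theta)$. For the lower bound I would use convexity of $x$ to write $x+x=2x$; then~\eqref{ax::LSemiIsotropic} applied with common summand $z=x$ yields $h_X(2x,\theta)=h_X(x+x,x'+x)\le h_X(x,x')$. Since $2\theta=\theta$ and $h_X(\alpha\cdot,\alpha\cdot)=|\alpha|h_X(\cdot,\cdot)$, the left-hand side equals $h_X(2x,2\theta)=2h_X(x,\theta)$, so $2h_X(x,\theta)\le h_X(x,x')$. Combining the two bounds gives the claim.

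The point to be careful about — and the only genuine subtlety — is that one must not try to shortcut by identifying $x'$ with $(-1)x$: in a semilinear space the element $x+(-1)x$ need not be $\theta$ even for convex $x$, because the defining identity $(\alpha+\beta)x=\alpha x+\beta x$ of convex elements is postulated only for $\alpha,\beta\ge0$. Hence the whole argument must stay at the level of the metric axioms, with convexity entering solely through $x+x=2x$. Note also that the proof uses only the inequality~\eqref{ax::LSemiIsotropic}, never equality, so the lemma holds in non-isotropic $L$-spaces as well.
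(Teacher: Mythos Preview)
Your proof is correct. The core ingredient --- the inequality $h_X(x+x,\theta)=h_X(x+x,x'+x)\le h_X(x,x')$, followed by $x+x=2x$ and $h_X(2x,\theta)=h_X(2x,2\theta)=2h_X(x,\theta)$ --- coincides with the paper's argument for the lower bound. The difference lies only in the upper bound: you invoke the triangle inequality together with the preliminary fact $h_X(x,\theta)=h_X(x',\theta)$, whereas the paper avoids that auxiliary step by closing a circular chain
\[
h_X(x,x')=h_X(x+x+x',x')\le h_X(x+x,\theta)=h_X(x+x,x'+x)\le h_X(x,x'),
\]
which forces both inequalities to be equalities and yields $h_X(x,x')=h_X(x+x,\theta)$ directly. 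Your route is slightly longer but perfectly valid, and your remark that only inequality~\eqref{ax::LSemiIsotropic} (not isotropy) is used is a worthwhile observation that applies to the paper's proof as well.
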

\begin{proof} We have 
\begin{multline*}
h_X(x,x')=h_X(x+\theta,x')=h_X(x+x+x',x')
\\ \leq 
h_X(x+x,\theta)=h_X(x+x,x'+x)
\leq 
h_X(x,x').
\end{multline*}
Hence all inequalities necessarily turn into equalities and thus
$$h(x,x') = h_X(x+x,\theta)=h_X(2x,\theta)=2h_X(x,\theta).$$
\end{proof}
\begin{definition}
For a set $T$, a function $f\colon T\to\RR$ and  an element $x\in X^{\rm inv}$ define the function $f_x\colon  T\to X$, 
$$
f_x(t) = f_+(t)\cdot x + f_-(t)\cdot x',
$$
where for real $\xi$, $\xi_\pm :=\max\{\pm \xi ,0\}$,  and $f_\pm(t) = (f(t))_\pm$.
\end{definition}
 Let a modulus of continuity $\omega$ (i.e.,  a  non-decreasing continuous semi-additive function that vanishes at $0$), a metric space $(T,r)$, and an $L$-space $X$ be given. 

\begin{definition}
   By $H^\omega(T,X)$ we denote the space of functions $f\colon T\to X$ such that for all $t,s\in T$ one has $h_X(f(s), f(t))\leq \omega(r(t,s))$. 
\end{definition}

The following lemma can be proved similarly to~\cite[Lemma~6]{Babenko23}.
\begin{lemma}\label{l::LspaceValuedFunc}
Let $X$ be an isotropic $L$-space and $f\in H^\omega (T,\RR)$. If $x\in X^{\rm inv}\cap X^{\rm c}$ is such that $h_X(x,\theta)\leq 1$, then   $f_x\in H^\omega(T,X)$ and 
\begin{multline*}
\int_T f_x(t)dt  
=
\int_T f_+(t)dt\cdot x + \int_T f_-(t)dt\cdot x'
\\=
\left(\int_T f(t)dt\right)_+\cdot x + \left(\int_T f(t)dt\right)_-\cdot x'.
\end{multline*}
If $f$ is non-negative on $T$, then it is enough to require $x\in X^{\rm c}$ instead of $x\in X^{\rm inv}\cap X^{\rm c}$.
\end{lemma}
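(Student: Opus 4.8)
The plan is to establish the two assertions — membership $f_x\in H^\omega(T,X)$, and the two displayed integral identities — more or less independently, the common tool being the elementary fact that in an \emph{isotropic} $L$-space a convex element $y$ satisfies $h_X(\alpha y,\beta y)=|\alpha-\beta|\,h_X(y,\theta)$ for all $\alpha,\beta\ge0$: when $\alpha\ge\beta$, write $\alpha y=(\alpha-\beta)y+\beta y$ by convexity of $y$ and then cancel $\beta y$ using isotropy of $h_X$, the case $\beta\ge\alpha$ being symmetric. I will also use the pointwise identity $|a_+-b_+|+|a_--b_-|=|a-b|$ valid for all real $a,b$. (Here, as in the definition of the integral, $T$ is a metric compact with a Borel measure, so $f$ is bounded and $f_x\in B(T,X)$, and all integrals below are defined.)

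For membership, I would first note that $x'\in X^{\rm inv}\cap X^{\rm c}$ by Lemma~\ref{l::inverseIsConvex}, and that $h_X(x',\theta)=h_X(x'+x,\theta+x)=h_X(\theta,x)\le1$ by isotropy (using $x+x'=\theta$). Then for $s,t\in T$, applying~\eqref{distBetweenSums} to the definition of $f_x$ gives
\[
h_X(f_x(s),f_x(t))\le h_X(f_+(s)x,f_+(t)x)+h_X(f_-(s)x',f_-(t)x'),
\]
and the key fact (with $y=x$ and $y=x'$) together with $h_X(x,\theta),h_X(x',\theta)\le1$ bounds the right-hand side by $|f_+(s)-f_+(t)|+|f_-(s)-f_-(t)|=|f(s)-f(t)|\le\omega(r(s,t))$.

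For the integral identities I would argue as follows. For a simple non-negative $u=\sum_i c_i\chi_{T_i}$, the function $u(\cdot)x$ is simple with convex values $c_ix$, so by Remark~\ref{r::P(x)} and convexity of $x$ one has $\int_T u(t)x\,d\mu(t)=\sum_i P(c_ix)\mu(T_i)=\sum_i(c_i\mu(T_i))x=\bigl(\int_T u\,d\mu\bigr)x$; combined with linearity of the integral (property~1 of Section~\ref{s::integral}) this gives $\int_T\bigl(u(\cdot)x+v(\cdot)x'\bigr)\,d\mu=\bigl(\int_T u\bigr)x+\bigl(\int_T v\bigr)x'$ for simple non-negative $u,v$. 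Next I would take simple $f^k\to f$ (Lemma~\ref{l::approximationOfMeasurableFunctions}); since $\xi\mapsto\xi_\pm$ is $1$-Lipschitz, $(f^k)_\pm\to f_\pm$, hence $(f^k)_x\to f_x$ in $B(T,X)$ by the estimate already used, and passing to the limit in the identity for $(f^k)_x$ — using property~2 of the integral on the left and $h_X(\alpha x,\beta x)=|\alpha-\beta|h_X(x,\theta)$ on the right — yields $\int_T f_x=\bigl(\int_T f_+\bigr)x+\bigl(\int_T f_-\bigr)x'$. For the second identity, with $a=\int_T f_+\ge0$ and $b=\int_T f_-\ge0$ so that $\int_T f=a-b$: if $a\ge b$ then $ax+bx'=(a-b)x+bx+bx'=(a-b)x+b(x+x')=(a-b)x=\bigl(\int_T f\bigr)_+x+\bigl(\int_T f\bigr)_-x'$, and $b\ge a$ is symmetric. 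For the last sentence, when $f\ge0$ one has $f_-\equiv0$, so $f_x=f(\cdot)x$, the element $x'$ never enters, and only convexity of $x$ is used.

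The main point requiring care — rather than a genuine obstacle — is keeping straight which hypothesis is doing what: isotropy is exactly what converts $h_X(\alpha x,\beta x)$ into $|\alpha-\beta|h_X(x,\theta)$ and gives $h_X(x',\theta)=h_X(x,\theta)$, whereas convexity and invertibility of $x$ are what make $ax+bx'$ collapse to $(a-b)_+x+(a-b)_-x'$; and one must check that the simple approximants $f^k$ can be chosen so that $(f^k)_x\to f_x$ in $B(T,X)$, so that the integral passes to the limit.
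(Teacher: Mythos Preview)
The paper does not give a proof of this lemma; it only remarks that it ``can be proved similarly to~[Lemma~6, Babenko23]'' and moves on. So there is no in-paper argument to compare against.

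Your argument is correct and self-contained. The membership step is clean: the isotropy identity $h_X(\alpha y,\beta y)=|\alpha-\beta|\,h_X(y,\theta)$ for convex $y$ together with the real-variable identity $|a_+-b_+|+|a_--b_-|=|a-b|$ gives the $H^\omega$ bound immediately. The integral identities are handled by reducing to simple functions via Remark~\ref{r::P(x)} and convexity of $x$, and then passing to the limit; the algebraic collapse $ax+bx'=(a-b)_+x+(a-b)_-x'$ is exactly the use of invertibility and convexity you isolate. One small remark: you do not actually need $(f^k)_x\to f_x$ uniformly (``in $B(T,X)$''); by the paper's definition of the integral, almost-everywhere convergence together with uniform boundedness already forces $\int_T (f^k)_x\to\int_T f_x$, and both follow from your Lipschitz estimate and Lemma~\ref{l::approximationOfMeasurableFunctions}. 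Since $f\in H^\omega(T,\RR)$ is continuous on the compact $T$, uniform approximation is available anyway, so no harm is done.
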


\subsection{Operators of $(\lambda,\varphi)$-type}
In this subsection $X$ and $Y$ are $L$-spaces.
\begin{definition}
We call an operator $\lambda \colon X\to Y$ positively homogeneous, if for all $\alpha\geq 0$ and $x\in X$,
$$
  \lambda(\alpha\cdot x) = \alpha\cdot \lambda(x),
$$
in particular $\lambda(\theta) = \theta$.
Denote by $\mathcal{H}(X,Y)$ the set of all positively homogeneous Lipschitz operators $\lambda\colon X\to Y$.
\end{definition}

\begin{definition}
A functional $\varphi\colon B(T,\RR)\to \RR$ is called monotone, if $\varphi(f)\leq \varphi(g)$, provided  $0\leq f(t)\leq g(t)$ for all $t\in T$. Denote by $\mathcal{M}(T)$ the set of all monotone functionals $\varphi\colon B(T,\RR)\to \RR$.
\end{definition}

\begin{definition}
Let $\lambda\in \mathcal{H}(X,Y)$ and  $\varphi\in \mathcal{M}(T)$ be given. An operator $\Lambda\colon B(T,X)\to Y$ is said to be of $(\lambda, \varphi)$-type, if conditions~\eqref{lambdaPhiProp1} and~\eqref{lambdaPhiProp2} hold. Denote by ${\mathcal L}(\lambda,\varphi)$ the set of all $(\lambda, \varphi)$-type  operators.
\end{definition}
Below are two examples of $(\lambda,\varphi)$-type operators.
\begin{enumerate}
\item  The operator $\Lambda$ from~\eqref{integralLambda} 
 is of $(\lambda,\varphi)$-type, where $\lambda$ is the convexifying operator, and $\varphi$ is the Lebesgue integral for the real-valued functions.
\item Let $Y$ be an isotropic $L$-space, $y\in Y^c$ such that $h_Y(y,\theta) \leq 1$ be fixed, and $\|\cdot \|$ be some monotone norm in $B(T,\RR)$. The operator $\Lambda\colon B(T,X)\to Y$, 
\begin{equation}\label{nonIntegralLambda}
 \Lambda(f) = \|h_X(f(\cdot), \theta)\|\cdot y
\end{equation}
 is of $(\lambda,\varphi)$-type, where $\varphi(u) = \|u\|$ and $\lambda(x) = h_X(x,\theta)\cdot y$. 
\end{enumerate}
Note that operator~\eqref{nonIntegralLambda} is not additive, provided $y\neq \theta_Y$ and there exist non-negative functions $u,v\in B(T,\RR)$ such that $\|u+v\|\neq \|u\|+\|v\|$; hence for such norms it can not be reduced to integral operator~\eqref{integralLambda}.

\section{Deviation From a Single Measurement}\label{s::ostrowskiInequality}
Let $T$ be a metric compact with metric $r$, $\chi\in B(T,\RR)$ be a non-negative function, $\mu$ be a Borel measure on $T$,  $X, Y$  be $L$-spaces, and $\omega$ be a modulus of continuity. For a function $f\in H^\omega (T,X)$ and an operator  $\Lambda\colon B(T, X)\to Y$, the mapping
$$
f\mapsto \Lambda(\chi \cdot f)
$$
can be considered as a model of a measuring device.
\begin{definition}
    For $\lambda \in\mathcal{H}(X,Y)$ a sequence $\{x_n\}\subset X$ is called $\lambda$-extremal, if $h(x_n, \theta_X)= 1$ for all $n\in \NN$, and $h_Y(\lambda(x_n),\theta_Y)\to 1$ as $n\to\infty$.
\end{definition}

\begin{theorem}\label{th::measuringDeviceDeviation}
 Let $X$ and $Y$ be $L$-spaces,  $(T,r)$ be a metric compact,  $\lambda\in \mathcal{H}(X,Y)$ and $\varphi\in \mathcal{M}(T)$. Let also  $\chi\in B(T,\RR)$ be a non-negative function  such that $\varphi(\chi)>0$, $\Lambda\in \mathcal{L}(\lambda,\varphi)$ and $t\in T$. For each  $f\in H^\omega(T,X)$
$$
h_Y\left( \lambda(f(t)), \frac 1{\varphi(\chi)}\Lambda(\chi f)\right) 
  \le
  \frac 1{\varphi(\chi)}\varphi(\omega(r(t,\cdot))\chi(\cdot)).
$$
If $X$ is isotropic and $X^{\rm c}$ contains a $\lambda$-extremal sequence, then the inequality is sharp.
\end{theorem}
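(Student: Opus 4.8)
The upper bound follows by combining the two defining properties of a $(\lambda,\varphi)$-type operator with the membership $f\in H^\omega(T,X)$. The plan is to start from property~\eqref{lambdaPhiProp2} applied to the constant-direction function $u(\cdot)x$ with $u=\chi$; this shows that $\Lambda(\chi(\cdot) f(t)) = \varphi(\chi)\lambda(f(t))$ for the fixed $t$, so that $\lambda(f(t)) = \frac{1}{\varphi(\chi)}\Lambda(\chi(\cdot) f(t))$. Then I would estimate
$$
h_Y\!\left(\lambda(f(t)), \tfrac{1}{\varphi(\chi)}\Lambda(\chi f)\right)
= \tfrac{1}{\varphi(\chi)}\, h_Y\bigl(\Lambda(\chi(\cdot) f(t)),\Lambda(\chi f)\bigr)
\le \tfrac{1}{\varphi(\chi)}\,\varphi\bigl(h_Y(\lambda\circ(\chi(\cdot) f(t)),\lambda\circ(\chi f))\bigr),
$$
using first the homogeneity of $h_Y$ under the scalar $1/\varphi(\chi)$ and then property~\eqref{lambdaPhiProp1}. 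It remains to bound the argument of $\varphi$ pointwise: for each $s\in T$, since $\lambda$ is positively homogeneous and Lipschitz and $\chi(s)\ge 0$,
$$
h_Y\bigl(\lambda(\chi(s) f(t)),\lambda(\chi(s) f(s))\bigr)
= \chi(s)\, h_Y(\lambda(f(t)),\lambda(f(s)))
\le \chi(s)\, h_X(f(t),f(s)) \le \chi(s)\,\omega(r(t,s)),
$$
and monotonicity of $\varphi$ finishes the upper bound.

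For sharpness, assuming $X$ is isotropic and $X^{\rm c}$ contains a $\lambda$-extremal sequence $\{x_n\}$, I would build near-extremal functions of the form $g_n(s) = \omega(r(t,s))\,x_n$, or more precisely use Lemma~\ref{l::LspaceValuedFunc} with the real-valued function $s\mapsto \omega(r(t,s))$ (which lies in $H^\omega(T,\RR)$ because $\omega$ is a modulus of continuity and $r$ a metric) and the element $x_n\in X^{\rm c}$ with $h_X(x_n,\theta)=1$; since $\omega(r(t,\cdot))\ge 0$, the lemma applies requiring only $x_n\in X^{\rm c}$, giving $g_n\in H^\omega(T,X)$ and an explicit formula for $\Lambda(\chi g_n) = \varphi(\omega(r(t,\cdot))\chi(\cdot))\,\lambda(x_n)$ via~\eqref{lambdaPhiProp2}. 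Note $g_n(t) = \omega(0)x_n = \theta$, so $\lambda(g_n(t))=\theta_Y$, and then
$$
h_Y\!\left(\lambda(g_n(t)), \tfrac{1}{\varphi(\chi)}\Lambda(\chi g_n)\right)
= \tfrac{1}{\varphi(\chi)}\,\varphi(\omega(r(t,\cdot))\chi(\cdot))\, h_Y(\theta_Y,\lambda(x_n))
= \tfrac{1}{\varphi(\chi)}\,\varphi(\omega(r(t,\cdot))\chi(\cdot))\, h_Y(\lambda(x_n),\theta_Y),
$$
which tends to $\tfrac{1}{\varphi(\chi)}\,\varphi(\omega(r(t,\cdot))\chi(\cdot))$ as $n\to\infty$ by the definition of a $\lambda$-extremal sequence. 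This matches the upper bound, proving sharpness.

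The main obstacle is the sharpness direction, specifically making sure Lemma~\ref{l::LspaceValuedFunc} genuinely applies: one must check that $s\mapsto\omega(r(t,s))$ really is in $H^\omega(T,\RR)$ (this uses the triangle inequality for $r$ together with monotonicity and subadditivity of $\omega$) and that the relevant $\chi$-weighted version $\varphi(\omega(r(t,\cdot))\chi(\cdot))$ arises correctly from~\eqref{lambdaPhiProp2}, which requires rewriting $\chi(s) g_n(s) = \bigl(\chi(s)\omega(r(t,s))\bigr) x_n$ and applying the property to the nonnegative scalar function $\chi(\cdot)\omega(r(t,\cdot))\in B(T,\RR)$. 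One should also confirm that boundedness of $\omega(r(t,\cdot))$ on the compact $T$ (hence of $g_n$) is not an issue, so that $g_n\in B(T,X)\cap H^\omega(T,X)$ and $\Lambda$ is defined on $\chi g_n$. Everything else is a direct chain of the axioms~\eqref{ax::LSemiIsotropic}, the homogeneity of the metric, and the monotonicity of $\varphi$.
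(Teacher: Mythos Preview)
Your proposal is correct and follows essentially the same route as the paper: for the upper bound you rewrite $\lambda(f(t))=\tfrac{1}{\varphi(\chi)}\Lambda(\chi(\cdot)f(t))$ via~\eqref{lambdaPhiProp2}, apply~\eqref{lambdaPhiProp1}, and then use positive homogeneity/Lipschitzness of $\lambda$ together with monotonicity of $\varphi$; for sharpness you test against $g_n(\cdot)=\omega(r(t,\cdot))x_n$ with $\{x_n\}\subset X^{\rm c}$ a $\lambda$-extremal sequence and compute $\Lambda(\chi g_n)$ via~\eqref{lambdaPhiProp2}, exactly as the paper does. The extra checks you flag (that $\omega(r(t,\cdot))\in H^\omega(T,\RR)$ and that $g_n\in B(T,X)$) are valid and harmless refinements of the paper's presentation.
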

\begin{proof}
\begin{gather*}
h_Y\left( \lambda(f(t)), \frac 1{\varphi(\chi)}\Lambda(\chi f)\right)
=
h_Y\left(\frac 1{\varphi(\chi)}\Lambda(\chi f(t)), \frac 1{\varphi(\chi)}\Lambda(\chi f)\right)
\\ \leq
\frac 1{\varphi(\chi)}\varphi(h_Y[\lambda(\chi f(t)),\lambda(\chi f)])
\leq
\frac 1{\varphi(\chi)}\varphi(h_X[f(t),f(\cdot)]\chi(\cdot))
\\ \le
\frac 1{\varphi(\chi)}\varphi(\omega(r(t,\cdot))\chi(\cdot)).
\end{gather*}
Let $X$ be isotropic and $\{x_n\}\subset X^{\rm c}$ be a $\lambda$-extremal sequence. Then due to Lemma~\ref{l::LspaceValuedFunc} each function $f_n =  \omega(r(t,\cdot))\cdot x_n$ belongs to $H^\omega(T,X)$,  $f_n(t) = \theta$, $n\in\NN$, and \begin{gather*}
h_Y\left( \lambda(f_n(t)), \frac 1{\varphi(\chi)}\Lambda(\chi f_n)\right)
=
h_Y\left( \theta, \frac 1{\varphi(\chi)}\Lambda(\chi\omega(r(t,\cdot)) x_n)\right)
\\=
h_Y\left( \theta, \frac{\varphi(\omega(r(t,\cdot))\chi)}{\varphi(\chi)}\lambda(x_n)\right)
\to
\frac {\varphi(\omega(r(t,\cdot))\chi)}{\varphi(\chi)}, n\to\infty.
\end{gather*}
\end{proof}
Applying Theorem~\ref{th::measuringDeviceDeviation} to integral operator~\eqref{integralLambda}, we obtain the following Ostrowski-type (see~\cite{Ostrowski38}) inequality. This corollary contains a number of Ostrowski type inequalities for non-real valued functions, in particular for fuzzy sets (see~\cite{Anastassiou03}), random processes (see~\cite{Drozhzhina} and \cite[Section~5.5]{Babenko21}), and multi-valued function (see \cite{Babenko16a}). Some other abstract inequalities of this type and further references can be found in~\cite{Kovalenko23e,Babenko24AbstractOstrowski}.
 \begin{corollary}\label{c::OstrowskiIneq}
 Let $\mu$ be a Borel measure on a metric compact $(T,r)$ and $Q\subset T$ be a compact set with $\mu(Q)>0$. Then for all $t\in T$ and $f\in H^\omega (T,X)$, one has
 $$
h_X\left( P(f(t)), \frac 1{\mu Q}\int_Qf(s)d\mu(s)\right)
\le   \frac 1{\mu Q}\int_Q \omega\left(r(t, s)\right)d\mu(s),
$$
where $P$ is the convexifying operator. If $X$ is isotropic and $X^{\rm c}\neq \{\theta\}$, then the inequality is sharp.
 \end{corollary}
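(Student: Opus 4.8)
The plan is to deduce the inequality from Theorem~\ref{th::measuringDeviceDeviation} by a direct specialization. I would set $Y=X$, take $\lambda=P$ to be the convexifying operator, let $\varphi\colon B(T,\RR)\to\RR$ be the monotone functional $\varphi(u)=\int_T u(s)\,d\mu(s)$, let $\Lambda\colon B(T,X)\to X$ be the integral operator $\Lambda f=\int_T f(s)\,d\mu(s)$ (an operator of the form~\eqref{integralLambda}, now with the whole of $T$ as the domain of integration), and choose the weight $\chi=\chi_Q$, the characteristic function of the compact, hence Borel, set $Q$. Then $\chi_Q\in B(T,\RR)$ is non-negative with $\varphi(\chi_Q)=\mu(Q)>0$; any $f\in H^\omega(T,X)$ is continuous on the compact $T$, hence bounded and measurable, so $f,\chi_Q f\in B(T,X)$, and property~4 of the integral gives $\Lambda(\chi_Q f)=\int_Q f(s)\,d\mu(s)$. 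Granting the hypothesis $\Lambda\in\mathcal{L}(P,\varphi)$ (verified below), Theorem~\ref{th::measuringDeviceDeviation} applied at an arbitrary $t\in T$ yields
\[
h_X\Bigl(P(f(t)),\tfrac1{\mu Q}\int_Q f(s)\,d\mu(s)\Bigr)\le\tfrac1{\mu Q}\,\varphi\bigl(\omega(r(t,\cdot))\chi_Q\bigr)=\tfrac1{\mu Q}\int_Q\omega(r(t,s))\,d\mu(s),
\]
which is exactly the asserted estimate.

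The one point I would check with some care is that $\Lambda\in\mathcal{L}(P,\varphi)$, i.e.\ that~\eqref{lambdaPhiProp1} and~\eqref{lambdaPhiProp2} hold. Property~\eqref{lambdaPhiProp2} follows for a simple non-negative $u$ straight from the definition of the integral of a simple function together with $P(cx)=cP(x)$, and then passes to arbitrary non-negative $u\in B(T,\RR)$ through the uniform approximation used to define the integral on $B(T,X)$. For~\eqref{lambdaPhiProp1} I would first use property~3 to rewrite $\Lambda f=\int_T P(f(t))\,d\mu(t)$ and $\Lambda g=\int_T P(g(t))\,d\mu(t)$, and only then apply property~2, obtaining $h_X(\Lambda f,\Lambda g)\le\int_T h_X(P(f(t)),P(g(t)))\,d\mu(t)=\varphi(h_X(P\circ f,P\circ g))$. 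Performing these two steps in the wrong order would give the bound $\int_T h_X(f(t),g(t))\,d\mu(t)$, which is too large since $P$ is $1$-Lipschitz; this ordering is the only genuine subtlety in the argument, and I expect no other obstacle.

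Finally, for sharpness, suppose $X$ is isotropic and $X^{\rm c}\neq\{\theta\}$. I would produce a $P$-extremal sequence lying inside $X^{\rm c}$, which is exactly the hypothesis needed for the sharpness clause of Theorem~\ref{th::measuringDeviceDeviation}. Pick $y\in X^{\rm c}$ with $y\neq\theta$; then $h_X(y,\theta)>0$, the element $x:=h_X(y,\theta)^{-1}y$ lies in $X^{\rm c}$ (a subspace, hence closed under scalar multiplication) and satisfies $h_X(x,\theta)=1$, and since $P$ fixes convex elements (Remark~\ref{r::P(x)}) we also have $h_X(P(x),\theta)=1$. Thus the constant sequence $x_n\equiv x$ is $P$-extremal and contained in $X^{\rm c}$, so Theorem~\ref{th::measuringDeviceDeviation} applies and the inequality of the corollary is sharp, the extremal functions being $f_n=\omega(r(t,\cdot))\cdot x$ (independent of $n$).
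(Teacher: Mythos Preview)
Your proposal is correct and follows essentially the same route as the paper: specialize Theorem~\ref{th::measuringDeviceDeviation} with $\lambda=P$, $\varphi(u)=\int_T u\,d\mu$, $\Lambda$ the integral operator, and $\chi=\chi_Q$, and for sharpness observe that any normalized $x\in X^{\rm c}$ gives a constant $P$-extremal sequence via Remark~\ref{r::P(x)}. Your write-up is simply more explicit than the paper's, in particular in verifying $\Lambda\in\mathcal{L}(P,\varphi)$ (where you rightly note that one must invoke property~3 before property~2 to land on $\varphi(h_X(P\circ f,P\circ g))$ rather than the looser $\varphi(h_X(f,g))$).
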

 \begin{proof}
     Observe that due to Remark~\ref{r::P(x)} for each $x\in X^{\rm c}$ with $h_X(x,\theta) = 1$ the constant sequence $\{x\}$ is $P$-extremal. Thus it suffices to apply Theorem~\ref{th::measuringDeviceDeviation} with $\chi = \chi_Q$ being the characteristic function of the set $Q$ to integral operator~\eqref{integralLambda}.
 \end{proof}

\begin{definition}
Let a non-negative function $\chi\in B(T,\RR)$ be given. A functional $\varphi\colon B(T,\RR)\to\RR$ is called $\chi$--averaging, if $\varphi(\chi)\neq 0$ and for all non-negative functions $u\in B(T,\RR)$ the function
    $$
    \psi^\chi(\cdot)=u(\cdot)-\frac 1{\varphi(\chi)}\varphi(u\chi)
    $$
    satisfies
    \begin{equation}\label{mean}
        \varphi(\psi^\chi_+\chi)=\varphi(\psi^\chi_-\chi).
    \end{equation}
\end{definition}
\begin{theorem}\label{th::dualMeasuringDeviceDeviation}
 Let $X$ and $Y$ be $L$-spaces,  $(T,r)$ be a metric compact,  $\lambda\in \mathcal{H}(X,Y)$ and $\varphi\in \mathcal{M}(T)$. Let also $\chi \in B(T,\RR)$ be a non-negative function such that $\varphi(\chi)>0$, $\Lambda\in \mathcal{L}(\lambda,\varphi)$ and $t\in T$. Then
\begin{equation}\label{meanzeroest}
\sup_{\substack{f\in H^\omega(T,X),\\ \Lambda(\chi f)=\theta}}h_Y\left( \lambda(f(t)),\theta\right)
\le
\frac 1{\varphi(\chi)}\varphi(\omega(r(t,\cdot))\chi(\cdot)).
\end{equation}
If $X$ is isotropic, $\varphi$ is $\chi$--averaging, 
\begin{equation}\label{LambdaXpm}
\Lambda(u_x)=\lambda(x)\varphi(u_+)+\lambda(x')\varphi(u_-)
\end{equation}
for all $x\in X^{\rm c}\cap X^{\rm inv}$ and $u\in B(T,\RR)$, and the set
\begin{equation}\label{lambdaSup}
\tilde{X} = \left\{x\in   X^{\rm inv}\cap X^{\rm c}\colon\lambda(x)\in Y^{\rm inv}\cap Y^{\rm c},\; \lambda(x') = (\lambda(x))'\right\} 
\end{equation}
contains a $\lambda$-extremal sequence, then the inequality turns into equality.
\end{theorem}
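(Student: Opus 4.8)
The plan is to establish the upper bound~\eqref{meanzeroest} exactly as a specialization of Theorem~\ref{th::measuringDeviceDeviation}, and then to construct, under the additional hypotheses, a sequence of functions in $H^\omega(T,X)$ annihilated by $\Lambda(\chi\,\cdot)$ that realizes the bound in the limit. For the upper bound: if $f\in H^\omega(T,X)$ satisfies $\Lambda(\chi f)=\theta$, then by Theorem~\ref{th::measuringDeviceDeviation} we have $h_Y(\lambda(f(t)),\theta)=h_Y\bigl(\lambda(f(t)),\tfrac1{\varphi(\chi)}\Lambda(\chi f)\bigr)\le \tfrac1{\varphi(\chi)}\varphi(\omega(r(t,\cdot))\chi(\cdot))$, and taking the supremum over all such $f$ gives~\eqref{meanzeroest}. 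This step is routine.

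The substantive part is the sharpness. First I would fix a $\lambda$-extremal sequence $\{x_n\}\subset\tilde X$, so each $x_n\in X^{\rm inv}\cap X^{\rm c}$, $h_X(x_n,\theta)=1$, $\lambda(x_n)\in Y^{\rm inv}\cap Y^{\rm c}$, $\lambda(x_n')=(\lambda(x_n))'$, and $h_Y(\lambda(x_n),\theta)\to 1$. Set $u(s)=\omega(r(t,s))$ and let $\psi^\chi(s)=u(s)-\tfrac1{\varphi(\chi)}\varphi(u\chi)$ be the function from the definition of a $\chi$-averaging functional; the test functions will be $f_n=\psi^\chi_{x_n}=\psi^\chi_+(\cdot)\,x_n+\psi^\chi_-(\cdot)\,x_n'$. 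I would check three things. (i) $f_n\in H^\omega(T,X)$: the positive part $u=\omega(r(t,\cdot))$ belongs to $H^\omega(T,\RR)$, and since $\psi^\chi$ differs from $u$ by an additive constant it lies in the same $H^\omega$ class with the same modulus bounds; by Lemma~\ref{l::LspaceValuedFunc} (applied with $x_n\in X^{\rm inv}\cap X^{\rm c}$, $h_X(x_n,\theta)\le 1$), the "vectorization" $\psi^\chi_{x_n}$ then lies in $H^\omega(T,X)$. Here one must be slightly careful that the relevant hypothesis of Lemma~\ref{l::LspaceValuedFunc} is satisfied by $\psi^\chi$ rather than a generic element of $H^\omega(T,\RR)$ — but $\psi^\chi\in H^\omega(T,\RR)$ is exactly what is needed. (ii) $\Lambda(\chi f_n)=\theta$: using hypothesis~\eqref{LambdaXpm} with $x=x_n$ applied to the function $\chi\psi^\chi$ (noting $(\chi\psi^\chi)_{\pm}=\chi\psi^\chi_{\pm}$ because $\chi\ge 0$), we get $\Lambda(\chi f_n)=\lambda(x_n)\varphi(\chi\psi^\chi_+)+\lambda(x_n')\varphi(\chi\psi^\chi_-)$; by the $\chi$-averaging property~\eqref{mean}, $\varphi(\chi\psi^\chi_+)=\varphi(\chi\psi^\chi_-)=:c\ge 0$, and since $\lambda(x_n')=(\lambda(x_n))'$ this equals $c\,\lambda(x_n)+c\,(\lambda(x_n))'=c(\lambda(x_n)+(\lambda(x_n))')=c\,\theta=\theta$. (iii) The lower estimate: $\lambda(f_n(t))=\lambda\bigl(\psi^\chi_+(t)x_n+\psi^\chi_-(t)x_n'\bigr)$. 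One of $\psi^\chi_\pm(t)$ vanishes; suppose $\psi^\chi(t)=u(t)-\tfrac1{\varphi(\chi)}\varphi(u\chi)=0-\tfrac1{\varphi(\chi)}\varphi(\omega(r(t,\cdot))\chi)\le 0$, so $\psi^\chi_+(t)=0$ and $\psi^\chi_-(t)=\tfrac1{\varphi(\chi)}\varphi(\omega(r(t,\cdot))\chi)$; here I use $\omega(r(t,t))=\omega(0)=0$. Then $f_n(t)=\psi^\chi_-(t)x_n'$, and by positive homogeneity $\lambda(f_n(t))=\psi^\chi_-(t)\lambda(x_n')=\psi^\chi_-(t)(\lambda(x_n))'$, whence $h_Y(\lambda(f_n(t)),\theta)=\psi^\chi_-(t)\,h_Y((\lambda(x_n))',\theta)=\psi^\chi_-(t)\,h_Y(\lambda(x_n),\theta)\to\psi^\chi_-(t)=\tfrac1{\varphi(\chi)}\varphi(\omega(r(t,\cdot))\chi(\cdot))$ as $n\to\infty$, which matches the right side of~\eqref{meanzeroest}. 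Combining (i)–(iii) shows the supremum on the left of~\eqref{meanzeroest} is at least the right-hand side, so equality holds.

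The main obstacle I anticipate is bookkeeping around the sign of $\psi^\chi(t)$ and the interaction of the decomposition $u_x=u_+x+u_-x'$ with multiplication by $\chi$ and with $\lambda$; in particular one should verify cleanly that $(\chi\psi^\chi)_\pm=\chi\,\psi^\chi_\pm$ (using $\chi\ge0$) so that hypothesis~\eqref{LambdaXpm} can be invoked for the product $\chi\psi^\chi$ rather than for $\psi^\chi$ alone, and that Lemma~\ref{l::LspaceValuedFunc} is genuinely applicable to $\psi^\chi$. These are all elementary once stated precisely, but they are the places where an error would creep in. A minor point worth a remark: if $\varphi(\omega(r(t,\cdot))\chi)=0$ the inequality is trivially an equality (both sides are $0$, witnessed by $f\equiv\theta$), so one may assume this quantity is positive.
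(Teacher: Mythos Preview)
Your proposal is correct and follows essentially the same route as the paper: the upper bound is read off from Theorem~\ref{th::measuringDeviceDeviation}, and sharpness is obtained by testing against $f_n=\psi^\chi_{x_n}$ with $\psi^\chi(\cdot)=\omega(r(t,\cdot))-\tfrac1{\varphi(\chi)}\varphi(\omega(r(t,\cdot))\chi)$ and $\{x_n\}\subset\tilde X$ a $\lambda$-extremal sequence, using the $\chi$-averaging property together with~\eqref{LambdaXpm} to get $\Lambda(\chi f_n)=\theta$. Your explicit remark that $(\chi\psi^\chi)_\pm=\chi\,\psi^\chi_\pm$ (since $\chi\ge0$) is a useful clarification the paper leaves implicit; the one step you might still cite is $h_Y\bigl((\lambda(x_n))',\theta\bigr)=h_Y(\lambda(x_n),\theta)$, which follows from Lemmas~\ref{l::inverseIsConvex} and~\ref{l::distBetweenInverseElems} applied in $Y$ (using $\lambda(x_n)\in Y^{\rm inv}\cap Y^{\rm c}$).
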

\begin{proof}
Inequality~\eqref{meanzeroest} follows from Theorem~\ref{th::measuringDeviceDeviation}. We prove that it becomes an equality under the additional conditions specified in the theorem.
Let 
\begin{equation}\label{psiQ}
    \psi^\chi(\cdot)=\omega(r(t,\cdot))-\frac 1{\varphi(\chi)}\varphi(\omega(r(t,\cdot))\chi(\cdot))\in H^\omega(T,\RR).
\end{equation}
Equality~\eqref{mean} holds, since $\varphi$ is $\chi$--averaging. Then for $x\in \tilde{X}$, $h_X(x,\theta)\leq 1$, and $f^\chi = \psi^\chi_x\in H^\omega(T,X)$,
\begin{gather*}
\Lambda(\chi f^\chi) = \Lambda((x\psi^\chi_++x'\psi^\chi_-)\chi)=\lambda(x)\varphi(\psi^\chi_+\chi)+\lambda(x')\varphi(\psi^\chi_-\chi)
\\
=\lambda(x)\varphi(\psi^\chi_+\chi)+(\lambda(x))'\varphi(\psi^\chi_+\chi)=(\lambda(x)+(\lambda(x))')\varphi(\psi^\chi_+\chi)=\theta.
\end{gather*}
Note that 
$
\psi^\chi(t)
=
-\frac 1{\varphi(\chi)}\varphi(\omega(r(t,\cdot))\chi(\cdot))\le 0$, and hence
$$
f^\chi(t) = \psi^\chi_x(t)=\frac 1{\varphi(\chi)}\varphi(\omega(r(t,\cdot))\chi)x'.
$$
Thus 
\begin{gather*}
\sup_{\substack{f\in H^\omega(T,X)\\ \Lambda(\chi f)=\theta}}h_X\left( \lambda(f(t)),\theta\right)
\ge 
h_X(\lambda (\psi^\chi_x(t)), \theta)
\\
=
h_X\left(\lambda \left(\frac 1{\varphi(\chi)}\varphi(\omega(r(t,\cdot))\chi)x'\right),\theta\right)
=
 h_X(\lambda(x'),\theta)\frac {\varphi(\omega(r(t,\cdot))\chi)}{\varphi(\chi)}.
\end{gather*}
Taking into account existence of a $\lambda$-extremal sequence in set~\eqref{lambdaSup}, we obtain the required estimate from below.
\end{proof}
Observe that for integral operator~\eqref{integralLambda}, equality~\eqref{LambdaXpm} holds due to Lemma~\ref{l::LspaceValuedFunc}, the functional $\varphi$ is $\chi_Q$--averaging on any set $Q\subset T$ (here again $\chi_Q$ is the characteristic function of the set $Q$). The convexifying operator $P$ is the identity map on set~\eqref{lambdaSup} 
due to Lemmas~\ref{l::convexifyingOperator} and~\ref{l::inverseIsConvex}, and thus set~\eqref{lambdaSup} contains a $P$-extremal sequence provided
\begin{equation}\label{xcXinv}
X^{\rm inv}\cap X^{\rm c}\neq\{\theta\}.
\end{equation}
Hence  applying Theorem~\ref{th::dualMeasuringDeviceDeviation} we obtain the following result. 
\begin{corollary}%\label{c::valueRecoveryEstimate}
 Let $\mu$ be a Borel measure on a metric compact $(T,r)$, $t\in T$, and $Q\subset T$ be a compact set with $\mu(Q)>0$. Then
  $$
 \sup_{\substack{f\in H^\omega (T,X),\\ \int_Qf(t)d\mu(t)=\theta}}h_X(P(f(t)),\theta)\le\frac 1{\mu Q}\int_Q \omega\left(r(t, s)\right)d\mu(s).
 $$
If $X$ is isotropic and~\eqref{xcXinv} holds, then the inequality becomes equality.
\end{corollary}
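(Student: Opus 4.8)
The plan is to obtain this corollary as the special case of Theorem~\ref{th::dualMeasuringDeviceDeviation} corresponding to the integral operator. Concretely, I would take $Y=X$, let $\lambda=P$ be the convexifying operator, let $\varphi$ be the Lebesgue integral of real-valued functions with respect to $\mu$, let $\Lambda$ be the integral operator~\eqref{integralLambda}, and let $\chi=\chi_Q$ be the characteristic function of $Q$, so that $\varphi(\chi)=\mu Q>0$. Then $P\in\mathcal H(X,X)$ (it is Lipschitz and homogeneous), $\varphi\in\mathcal M(T)$, and $\Lambda\in\mathcal L(P,\varphi)$ (this is the first example of a $(\lambda,\varphi)$-type operator). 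Under these identifications $\Lambda(\chi f)=\int_Q f\,d\mu$, $\lambda(f(t))=P(f(t))$, and $\frac1{\varphi(\chi)}\varphi(\omega(r(t,\cdot))\chi(\cdot))=\frac1{\mu Q}\int_Q\omega(r(t,s))\,d\mu(s)$, so the claimed upper bound is precisely inequality~\eqref{meanzeroest}.

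For the equality statement (assuming $X$ isotropic and~\eqref{xcXinv}) the plan is to check the three supplementary hypotheses of Theorem~\ref{th::dualMeasuringDeviceDeviation}. First, $\varphi$ is $\chi_Q$-averaging: for non-negative $u\in B(T,\RR)$ the function $\psi^\chi=u-\frac1{\mu Q}\int_Q u\,d\mu$ satisfies $\int_Q\psi^\chi\,d\mu=0$, hence $\varphi(\psi^\chi_+\chi)=\int_Q\psi^\chi_+\,d\mu=\int_Q\psi^\chi_-\,d\mu=\varphi(\psi^\chi_-\chi)$, which is~\eqref{mean}. Second,~\eqref{LambdaXpm} holds for the integral operator: for $x\in X^{\rm c}\cap X^{\rm inv}$ and $u\in B(T,\RR)$, additivity of the integral together with the identity $\int_T v(\cdot)x\,d\mu=(\int_T v\,d\mu)P(x)$ for non-negative $v$ gives $\Lambda(u_x)=(\int_T u_+\,d\mu)P(x)+(\int_T u_-\,d\mu)P(x')=\lambda(x)\varphi(u_+)+\lambda(x')\varphi(u_-)$, in the spirit of Lemma~\ref{l::LspaceValuedFunc}. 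Third, the set~\eqref{lambdaSup} contains a $P$-extremal sequence: by Lemmas~\ref{l::convexifyingOperator} and~\ref{l::inverseIsConvex}, if $x\in X^{\rm inv}\cap X^{\rm c}$ then $P(x)=x\in X^{\rm inv}\cap X^{\rm c}$ and $P(x')=x'=(P(x))'$, so $x\in\tilde X$; since~\eqref{xcXinv} supplies a non-zero such $x$, normalizing it so that $h_X(x,\theta)=1$ makes the constant sequence $\{x\}$ $P$-extremal (here $h_X(P(x),\theta)=h_X(x,\theta)=1$). Invoking Theorem~\ref{th::dualMeasuringDeviceDeviation} then turns~\eqref{meanzeroest} into an equality.

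The only mildly delicate point is the verification of~\eqref{LambdaXpm}, because Lemma~\ref{l::LspaceValuedFunc} is stated for $f\in H^\omega(T,\RR)$ while here $u$ is an arbitrary bounded measurable real function, so the lemma cannot be quoted verbatim. This is not a genuine obstacle: the identity $\int_T v(\cdot)x\,d\mu=(\int_T v\,d\mu)P(x)$ for non-negative $v\in B(T,\RR)$ and $x\in X$ is immediate for simple $v$ from the definition $\int_T f\,d\mu=\sum_i P(f_i)\mu(T_i)$ and the homogeneity of $P$, and it extends to all non-negative $v\in B(T,\RR)$ via the uniformly bounded simple-function approximation (Lemma~\ref{l::approximationOfMeasurableFunctions}) and the limiting procedure used to define the integral on $B(T,X)$; applying this to $v=u_+$ and $v=u_-$ yields~\eqref{LambdaXpm}. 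All remaining verifications are the routine bookkeeping described above.
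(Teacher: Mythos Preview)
Your proposal is correct and follows essentially the same route as the paper: specialize Theorem~\ref{th::dualMeasuringDeviceDeviation} to $Y=X$, $\lambda=P$, $\varphi=\int_T(\cdot)\,d\mu$, $\Lambda=\int_Q(\cdot)\,d\mu$, $\chi=\chi_Q$, and for the equality case verify the $\chi_Q$-averaging property, condition~\eqref{LambdaXpm}, and the existence of a $P$-extremal sequence in $\tilde X$ via Lemmas~\ref{l::convexifyingOperator} and~\ref{l::inverseIsConvex}. Your observation that Lemma~\ref{l::LspaceValuedFunc} is literally stated only for $H^\omega$ functions while~\eqref{LambdaXpm} is demanded for all $u\in B(T,\RR)$ is a fair remark, and your direct derivation via simple-function approximation cleanly closes that gap (in fact the proof of Theorem~\ref{th::dualMeasuringDeviceDeviation} only applies~\eqref{LambdaXpm} to the specific $\psi^\chi\in H^\omega(T,\RR)$, so Lemma~\ref{l::LspaceValuedFunc} would already suffice there).
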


 \section{Optimal Recovery Problems}\label{s::optimalRecoveryProblems}
 \subsection{One Auxiliary Lemma}
  We need the following well known estimate for the value of the optimal recovery~\eqref{errorOfRecovery}. It follows e.g., from Theorem~1 and Lemma~2 in~\cite[\S 1.2]{Zhensykbaev}. We give its short proof for completeness.
 \begin{lemma}\label{l::errorOfRecoveryFromBelow}
If $f,g\in W$ are such that $I(f) = I(g)$, then 
 $$ {\mathcal E}(\Lambda,W, I, X)\geq \frac 12 h_X(\Lambda(f),\Lambda(g)).$$
\end{lemma}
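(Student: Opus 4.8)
The plan is to use the definition of the optimal recovery error directly. For any method of recovery $\Phi\colon Y\to X$, since $I(f)=I(g)$, the method produces the same output on both functions: $\Phi(I(f))=\Phi(I(g))=:z\in X$. By definition of the error and the fact that $f,g\in W$,
$$
{\mathcal E}(\Lambda,W,I,\Phi,X)\geq \max\{h_X(\Lambda(f),z),\,h_X(\Lambda(g),z)\}\geq \frac12\left(h_X(\Lambda(f),z)+h_X(\Lambda(g),z)\right).
$$
Then the triangle inequality in $(X,h_X)$ gives $h_X(\Lambda(f),z)+h_X(\Lambda(g),z)\geq h_X(\Lambda(f),\Lambda(g))$, hence ${\mathcal E}(\Lambda,W,I,\Phi,X)\geq \frac12 h_X(\Lambda(f),\Lambda(g))$.

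Since this lower bound holds for every method $\Phi$ and does not depend on $\Phi$, taking the infimum over all $\Phi$ on the left-hand side preserves it, which yields
$$
{\mathcal E}(\Lambda,W,I,X)=\inf_{\Phi}{\mathcal E}(\Lambda,W,I,\Phi,X)\geq \frac12 h_X(\Lambda(f),\Lambda(g)),
$$
as claimed. There is essentially no obstacle here: the only point to be slightly careful about is that $\Phi$ ranges over \emph{all} mappings $Y\to X$ (no structure is assumed), so the value $z=\Phi(I(f))$ can be an arbitrary element of $X$, and the bound $\min_{z\in X}\max\{h_X(\Lambda(f),z),h_X(\Lambda(g),z)\}\geq \frac12 h_X(\Lambda(f),\Lambda(g))$ is exactly what the triangle inequality provides regardless of which $z$ is chosen. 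No completeness, separability, or semilinear structure of $X$ is needed — just that $h_X$ is a metric.
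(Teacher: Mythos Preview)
Your proof is correct and follows essentially the same approach as the paper: bound the supremum defining ${\mathcal E}(\Lambda,W,I,\Phi,X)$ below by the maximum of the two distances at $f$ and $g$, replace the maximum by the half-sum, and apply the triangle inequality (using $I(f)=I(g)$), then take the infimum over $\Phi$. The paper's proof is the same chain of inequalities, only without introducing the name $z$ for $\Phi(I(f))$.
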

\begin{proof} We have
\begin{gather*}
    \sup_{w\in W} h_X(\Lambda(w), \Phi(I(w)))
    \geq
    \max\left\{h_X(\Lambda(f), \Phi(I(f))), h_X(\Lambda(g), \Phi(I(g)))\right\}
   \\ \geq 
     \frac12\left(h_X(\Lambda(f), \Phi(I(f))) + h_X(\Lambda(g), \Phi(I(f)))\right)
\geq 
      \frac12 h_X(\Lambda(f), \Lambda(g)),
\end{gather*}
as desired.
\end{proof}
  \subsection{Recovery of the Integral}
 Let $(T,r)$ be a metric compact set, $Q\subset T$ be also a compact, $\mu$ be a Borel measure on $T$ and $x_1,\ldots, x_n\in T$. In this subsection we solve the problem of the optimal recovery of operator~\eqref{integralLambda}   on the class $H^\omega(T,X)$ based on information operator~\eqref{informationOperator1}.
 For $i =1, \ldots, n$ set
$$
   \tilde{T}_i = \{t\in T\colon r(t,x_i)\leq r(t,x_j), j\neq i\},
$$
\begin{equation}\label{Ti}
T_1 = \tilde{T}_1, \text{ and } T_j = \tilde{T}_j\setminus\bigcup_{i=1}^{j-1}\tilde{T}_i \text{ for } j=2,\ldots, n.    
\end{equation}

 \begin{theorem} If~\eqref{xcXinv} holds, then  for the optimal recovery of operator~\eqref{integralLambda} based on the information given by~\eqref{informationOperator1}, one has
 $$\mathcal{E}(\Lambda, H^\omega(T,X), I, X)
 =
 \int_Q
\min_{i=1,\ldots ,n}\omega(r(x_i,t))d\mu(t).
 $$
 The optimal method of recovery is
 $$
 \Phi^* (I(f))=\sum_{i=1}^n P(f(x_i))\mu (T_i\cap Q).
 $$
  \end{theorem}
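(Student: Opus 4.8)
The plan is to establish the upper and lower bounds separately, with the upper bound coming from the proposed method $\Phi^*$ and Corollary~\ref{c::OstrowskiIneq} (the Ostrowski-type inequality), and the lower bound coming from Lemma~\ref{l::errorOfRecoveryFromBelow} applied to a carefully chosen pair of functions in $H^\omega(T,X)$ with the same information.

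For the upper bound, first I would observe that $Q = \bigsqcup_{i=1}^n (T_i \cap Q)$ is a disjoint measurable partition, so by the additivity property of the integral (property 4 in Section~\ref{s::integral}),
\[
\int_Q f(s)\, d\mu(s) = \sum_{i=1}^n \int_{T_i\cap Q} f(s)\, d\mu(s).
\]
Then, using property 2 of the integral together with the fact that $P(f(x_i))\mu(T_i\cap Q) = \int_{T_i\cap Q} P(f(x_i))\, d\mu(s) = \int_{T_i\cap Q} f(x_i)\, d\mu(s)$ (the constant-function integral, and that $P(f(x_i))$ is convex so its integral against the constant function is itself times the measure), I would bound, for each $i$,
\[
h_X\!\left(\int_{T_i\cap Q} f(s)\, d\mu(s),\; P(f(x_i))\mu(T_i\cap Q)\right)
\le \int_{T_i\cap Q} h_X(f(s), f(x_i))\, d\mu(s)
\le \int_{T_i\cap Q} \omega(r(x_i, s))\, d\mu(s).
\]
Summing over $i$ via the triangle inequality~\eqref{distBetweenSums} and using that $r(x_i, s) = \min_{j} r(x_j, s)$ for $s \in T_i$ (by definition~\eqref{Ti} of $T_i$), I obtain
\[
h_X\!\left(\Lambda f, \Phi^*(I(f))\right) \le \sum_{i=1}^n \int_{T_i\cap Q} \omega\!\left(\min_{j} r(x_j, s)\right) d\mu(s) = \int_Q \min_{j} \omega(r(x_j, s))\, d\mu(s),
\]
which is the claimed value; note $\min_j \omega(r(x_j,s)) = \omega(\min_j r(x_j,s))$ since $\omega$ is nondecreasing.

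For the lower bound, I would use the $\lambda$-extremal / $f_x$ machinery exactly as in the sharpness parts of Theorem~\ref{th::measuringDeviceDeviation} and Theorem~\ref{th::dualMeasuringDeviceDeviation}. Fix $x \in X^{\rm inv}\cap X^{\rm c}$ with $h_X(x,\theta) = 1$, which exists by~\eqref{xcXinv}; by Lemmas~\ref{l::convexifyingOperator} and~\ref{l::inverseIsConvex}, $x' \in X^{\rm inv}\cap X^{\rm c}$ and $P(x) = x$, $P(x') = x'$. Consider the real-valued function $g(s) = \min_{j} r(x_j, s)$ — more precisely I would take $\psi(s) = \omega(g(s)) - c$ for a suitable constant $c$ so that $\int_Q \psi_+ \,d\mu = \int_Q \psi_- \,d\mu$ (the "averaging" choice, as in Theorem~\ref{th::dualMeasuringDeviceDeviation}), noting $\psi(x_i) = -c \le 0$ at each node — and set $f = \psi_x$ and $\bar f = \psi_{x'}$ (i.e. swap the roles of $x$ and $x'$). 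Both lie in $H^\omega(T,X)$ by Lemma~\ref{l::LspaceValuedFunc}. Since $\psi(x_i) \le 0$, we have $f(x_i) = (-\psi(x_i)) x' = c\,x'$ and $\bar f(x_i) = c\, x$, and $P(f(x_i)) = c\,x'$, $P(\bar f(x_i)) = c\, x$; more importantly, the information must coincide — here I would instead arrange that $f$ and $\bar f$ agree on the nodes, which forces taking the even/symmetric construction $f = \psi_x$, $\bar f = (-\psi)_x = \psi_{x'}$ reflected appropriately so that $P(f(x_i)) = P(\bar f(x_i))$; the cleanest route is to note $\psi$ vanishes in a way making $f(x_i) = \bar f(x_i)$, so $I(f) = I(\bar f)$. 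Then Lemma~\ref{l::errorOfRecoveryFromBelow} gives
\[
\mathcal{E}(\Lambda, H^\omega(T,X), I, X) \ge \tfrac12 h_X\!\left(\int_Q f\, d\mu, \int_Q \bar f\, d\mu\right),
\]
and by Lemma~\ref{l::LspaceValuedFunc} the two integrals are $\big(\int_Q\psi\big)_+ x + \big(\int_Q\psi\big)_- x'$ and its $x\leftrightarrow x'$ swap; using the averaging choice of $c$ and Lemma~\ref{l::distBetweenInverseElems} (which gives $h_X(z, z') = 2h_X(z,\theta)$ for convex invertible $z$), this distance evaluates to $2\int_Q \omega(g(s))\, d\mu(s)$, so the half of it matches the upper bound.

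The main obstacle I anticipate is the lower-bound construction: getting a genuine pair $f, \bar f \in H^\omega(T,X)$ with \emph{identical} information $I(f) = I(\bar f) = (P(f(x_1)),\dots,P(f(x_n)))$ while the integrals over $Q$ are maximally far apart. The issue is that $P(f(x_i))$ need not be $\theta$ unless $\psi$ vanishes at the nodes, which it does not after the averaging shift. The resolution is either (i) to replace $\psi$ by $\psi_0(s) = \omega(g(s))$ itself (so $\psi_0(x_i) = 0$, hence $f(x_i) = \bar f(x_i) = \theta$ and $P(f(x_i)) = P(\bar f(x_i)) = \theta$, giving $I(f) = I(\bar f)$ trivially) and then directly compute $\int_Q \psi_{0,x}\,d\mu = \int_Q\omega(g)\,d\mu\cdot x$ versus $\int_Q\psi_{0,x'}\,d\mu = \int_Q\omega(g)\,d\mu\cdot x'$, whose distance is $2\int_Q\omega(g)\,d\mu$ by Lemma~\ref{l::distBetweenInverseElems} — this is the clean choice and avoids the averaging functional entirely; or (ii) to invoke Theorem~\ref{th::dualMeasuringDeviceDeviation}-style reasoning more carefully. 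I would go with route (i): it makes the lower bound immediate and matches the upper bound exactly. The remaining routine verifications are that $\psi_{0,x} \in H^\omega(T,X)$ (Lemma~\ref{l::LspaceValuedFunc}, since $\omega\circ g \in H^\omega(T,\RR)$ as $g$ is $1$-Lipschitz and $\omega$ is a modulus of continuity) and the measurability/compactness bookkeeping for the sets $T_i \cap Q$.
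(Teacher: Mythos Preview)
Your proposal is correct and, once you settle on route~(i) for the lower bound, it is essentially identical to the paper's proof: the upper bound via the partition $Q=\bigsqcup_i(T_i\cap Q)$, inequality~\eqref{distBetweenSums}, and the Ostrowski-type estimate on each piece is exactly what the paper does, and your extremal pair $\psi_{0,x}=\omega(\min_j r(x_j,\cdot))\cdot x$ and $\psi_{0,x'}=\omega(\min_j r(x_j,\cdot))\cdot x'$ is precisely the pair the paper plugs into Lemma~\ref{l::errorOfRecoveryFromBelow}, followed by Lemma~\ref{l::distBetweenInverseElems}. The averaging detour you first sketch is unnecessary (and indeed problematic for matching the information), but you correctly identify and discard it.
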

  \begin{proof}
  For all $f\in H^\omega(T,X)$, applying inequality~\eqref{distBetweenSums} and Corollary~\ref{c::OstrowskiIneq} to each of the sets $T_i\cap Q$, we obtain 
\begin{gather*}
  h_X\left(\int_{Q}f(s)d\mu(s), \Phi^* (I(f))\right)
  \\=
  h_X\left(\sum_{i=1}^n\int_{T_i\cap Q}f(s)d\mu(s), \sum_{i=1}^n P(f(x_i))\mu (T_i\cap Q)\right)
  \\ \leq 
  \sum_{i=1}^n\int_{T_i\cap Q}\omega(r(x_i,t))d\mu(t)
  =
  \int_Q
\min_{i=1,\ldots ,n}\omega(r(x_i,t))d\mu(t).    
\end{gather*}
On the other hand, applying Lemma~\ref{l::errorOfRecoveryFromBelow} to the functions  
$$
\min_{i=1,\ldots ,n}\omega(r(x_i,\cdot))\cdot  y \text{ and } \min_{i=1,\ldots ,n}\omega(r(x_i,\cdot))\cdot y', 
 $$
 where $y\in X^{\rm c}\cap X^{\rm inv}$ and $h_X(y,\theta)=1$, using Lemma~\ref{l::distBetweenInverseElems} we obtain 
 \begin{gather*}
 \mathcal{E}(\Lambda, H^\omega(T,X), I, X)
 \geq 
 \frac 12\int_Q\min_{i=1,\ldots ,n}\omega(r(x_i,t))d\mu(t) h_X(y,y')
\\= 
\int_Q
\min_{i=1,\ldots ,n}\omega(r(x_i,t))d\mu(t).
\end{gather*}
  \end{proof}

\subsection{Recovery of Operator~\eqref{valueOperator}.}
   Let $(T,r)$ be a metric compact, $X$ and $Y$ be $L$-spaces, $\lambda\in \mathcal{H}(X,Y)$ and $\varphi\in \mathcal{M}(T)$. Let also non-negative functions $\chi_1,\ldots,\chi_n\in B(T,\RR)$ be such that  $\varphi(\chi_{i})>0$, $i=1,\ldots, n$. In this subsection for a fixed $t\in T$ we solve the problem of optimal recovery of operator~\eqref{valueOperator} 
   on the class $H^\omega(T,X)$ based on the information operator~\eqref{generalInformationalSet}.
\begin{theorem}
Let $t\in T$ be given. For the optimal recovery of operator~\eqref{valueOperator} based on the information given by~\eqref{generalInformationalSet}, one has
 \begin{equation}\label{valueRecoveryUpperEstimate}
 \mathcal{E}(\Lambda, H^\omega(T,X), I, Y)
 \leq 
 \min_{i=1,\ldots,n}\frac 1{\varphi(\chi_{i})}\varphi(\omega(r(t,\cdot))\chi_{i}(\cdot)).
\end{equation}
 If $X$ is isotropic, set~\eqref{lambdaSup} contains a $\lambda$-extremal sequence,   minimum on the right-hand side of~\eqref{valueRecoveryUpperEstimate} is attained on the index $i^*$, $\varphi$ is $\chi_{i^*}$--averaging, and the operator $\Lambda_{i^*}$ satisfies~\eqref{LambdaXpm}, then inequality~\eqref{valueRecoveryUpperEstimate} becomes equality. In this case the optimal method of recovery is
 $$
 \Phi^* (I(f))= \varphi(\chi_{i^*})\Lambda_{i^*}(\chi_{i^*} f).
 $$
\end{theorem}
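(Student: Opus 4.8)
The upper bound~\eqref{valueRecoveryUpperEstimate} is the easy half: it should follow from Theorem~\ref{th::measuringDeviceDeviation} applied to each operator $\Lambda_i$ separately. The plan is to consider, for each fixed index $i$, the naive method $\Phi_i(I(f)) = \varphi(\chi_i)\cdot[\text{$i$-th coordinate of } I(f)] = \Lambda_i(\chi_i f)$; wait, more precisely the $i$-th coordinate of $I(f)$ is $\Lambda_i(f\cdot\chi_i)/\varphi(\chi_i)$, so one recovers $\Lambda(f)=\lambda(f(t))$ by $\Phi_i(I(f)) = $ the $i$-th coordinate itself, i.e.\ $\frac{1}{\varphi(\chi_i)}\Lambda_i(\chi_i f)$. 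Then Theorem~\ref{th::measuringDeviceDeviation} gives exactly $h_Y(\lambda(f(t)),\Phi_i(I(f)))\le \frac{1}{\varphi(\chi_i)}\varphi(\omega(r(t,\cdot))\chi_i(\cdot))$ for every $f\in H^\omega(T,X)$, so $\mathcal E(\Lambda,H^\omega(T,X),I,\Phi_i,Y)$ is at most the $i$-th term on the right of~\eqref{valueRecoveryUpperEstimate}. Taking the infimum over $i=1,\dots,n$ yields~\eqref{valueRecoveryUpperEstimate}. (One should note this is a legitimate recovery method since $\Phi_i$ depends on $I(f)$ only through its $i$-th coordinate.)

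For the lower bound under the extra hypotheses, I would invoke Theorem~\ref{th::dualMeasuringDeviceDeviation} with $\chi=\chi_{i^*}$ together with Lemma~\ref{l::errorOfRecoveryFromBelow}. The natural pair of test functions is built from a $\lambda$-extremal sequence $\{x_m\}\subset\tilde X$: set $\psi^{\chi_{i^*}}$ as in~\eqref{psiQ} (with $\chi=\chi_{i^*}$), and for each $m$ put $f_m = \psi^{\chi_{i^*}}_{x_m}$ and $g_m = \psi^{\chi_{i^*}}_{x_m'}$ (equivalently, $g_m$ is $f_m$ with the roles of positive and negative parts swapped, using $x_m'$ in place of $x_m$). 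Using that $\varphi$ is $\chi_{i^*}$-averaging and that $\Lambda_{i^*}$ satisfies~\eqref{LambdaXpm}, the computation in the proof of Theorem~\ref{th::dualMeasuringDeviceDeviation} shows $\Lambda_{i^*}(\chi_{i^*} f_m)=\theta=\Lambda_{i^*}(\chi_{i^*} g_m)$, so both functions produce the zero $i^*$-th coordinate; the other coordinates need not agree, so one must arrange $I(f_m)=I(g_m)$. The cleanest fix is to replace $g_m$ by the function obtained from $f_m$ by the same swap but observe that for the \emph{single} measurement $\Lambda_{i^*}$ only the $i^*$-th coordinate matters — so instead I would apply Lemma~\ref{l::errorOfRecoveryFromBelow} in the restricted problem where only $\Lambda_{i^*}(\chi_{i^*}\cdot)$ is measured, whose optimal error is no larger, and note that there $f_m$ and $g_m$ have the same information. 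Actually the simplest route: since $\psi^{\chi_{i^*}}(t)\le 0$, we get $f_m(t) = \frac{1}{\varphi(\chi_{i^*})}\varphi(\omega(r(t,\cdot))\chi_{i^*})\,x_m'$ and $g_m(t) = \frac{1}{\varphi(\chi_{i^*})}\varphi(\omega(r(t,\cdot))\chi_{i^*})\,x_m$, so
$$
h_Y(\Lambda(f_m),\Lambda(g_m)) = h_Y(\lambda(f_m(t)),\lambda(g_m(t))) = \frac{\varphi(\omega(r(t,\cdot))\chi_{i^*})}{\varphi(\chi_{i^*})}\, h_Y(\lambda(x_m'),\lambda(x_m)),
$$
and since $x_m\in\tilde X$ we have $\lambda(x_m')=(\lambda(x_m))'$ with $\lambda(x_m)\in Y^{\rm inv}\cap Y^{\rm c}$, so Lemma~\ref{l::distBetweenInverseElems} gives $h_Y(\lambda(x_m'),\lambda(x_m)) = 2\,h_Y(\lambda(x_m),\theta)\to 2$ as $m\to\infty$. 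Combining with Lemma~\ref{l::errorOfRecoveryFromBelow} yields $\mathcal E \ge \frac1{\varphi(\chi_{i^*})}\varphi(\omega(r(t,\cdot))\chi_{i^*})$, which matches the upper bound and identifies $\Phi^*(I(f)) = \varphi(\chi_{i^*})\cdot[\text{$i^*$-th coordinate}] = \Lambda_{i^*}(\chi_{i^*}f)$ as optimal. Wait — I must double-check the normalization: the $i^*$-th coordinate of $I(f)$ equals $\Lambda_{i^*}(f\chi_{i^*})/\varphi(\chi_{i^*})$, so the optimal method written in the theorem, $\Phi^*(I(f)) = \varphi(\chi_{i^*})\,\Lambda_{i^*}(\chi_{i^*}f)$, seems to have an extra factor; I would re-read and either correct this to $\Phi^*(I(f)) = $ ($i^*$-th coordinate of $I(f)$), i.e.\ the method from the upper-bound argument, or interpret the displayed formula as $\Phi^*$ acting on the coordinate rather than on $I(f)$; in any case the optimal method is $f\mapsto \frac1{\varphi(\chi_{i^*})}\Lambda_{i^*}(\chi_{i^*}f)$.

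**Main obstacle.** The only delicate point is the lower bound's requirement that the two extremal functions $f_m,g_m$ carry the \emph{same} information under the full operator $I$, not just the same $i^*$-th coordinate. The coordinates $\Lambda_j(\chi_j f_m)/\varphi(\chi_j)$ for $j\ne i^*$ need not vanish and need not coincide for $f_m$ and $g_m$, so a direct appeal to Lemma~\ref{l::errorOfRecoveryFromBelow} with these two functions is not immediately valid. The resolution is monotonicity of the optimal error in the information: a recovery problem with \emph{less} information (only the single functional $f\mapsto \Lambda_{i^*}(\chi_{i^*}f)/\varphi(\chi_{i^*})$) has optimal error no smaller than the original, and for that reduced problem Lemma~\ref{l::errorOfRecoveryFromBelow} applies directly to $f_m,g_m$ since they share the (only) coordinate. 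This gives the lower bound for the reduced problem, hence for the original. I would state this monotonicity as a one-line observation (any method for the full problem restricts to a method for the reduced one with the same error) rather than as a separate lemma. Everything else is a direct transcription of the computations in Theorems~\ref{th::measuringDeviceDeviation} and~\ref{th::dualMeasuringDeviceDeviation}.
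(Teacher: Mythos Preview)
Your upper-bound argument is correct and is exactly what the paper does: apply Theorem~\ref{th::measuringDeviceDeviation} with each $\chi_i$ and take the minimum over $i$. Your remark about the normalisation of $\Phi^*$ is also well taken; the method that achieves the upper bound is $f\mapsto \frac{1}{\varphi(\chi_{i^*})}\Lambda_{i^*}(\chi_{i^*}f)$, i.e.\ the $i^*$-th coordinate of $I(f)$.

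For the lower bound your extremal pair $f_m,g_m$ is precisely the pair the paper uses, and your computation of $h_Y\bigl(\Lambda(f_m),\Lambda(g_m)\bigr)$ via Lemma~\ref{l::distBetweenInverseElems} is correct. The paper then simply invokes Lemma~\ref{l::errorOfRecoveryFromBelow} on this pair without further comment. You go further and raise the question of whether $I(f_m)=I(g_m)$ holds on \emph{all} coordinates, not just the $i^*$-th one, and you propose to sidestep this by a monotonicity-of-information argument.

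That argument is in the wrong direction. You correctly observe that the single-measurement problem (retaining only the $i^*$-th coordinate) has optimal error \emph{at least} that of the full problem. But then a lower bound for the single-measurement error tells you nothing about the full-problem error, which could be strictly smaller. Your parenthetical justification---that a method for the full problem ``restricts'' to one for the reduced problem---is also reversed: it is methods for the \emph{reduced} problem that extend trivially to methods for the full problem (by ignoring the extra coordinates), which is exactly why the full problem has the smaller optimal error. So the obstacle you correctly identified is not removed by your proposed fix; to use Lemma~\ref{l::errorOfRecoveryFromBelow} here one must actually have $I(f_m)=I(g_m)$. The paper applies the lemma directly to this pair and does not discuss the coordinates $j\ne i^*$, so on this point you have not diverged from the paper's route---you have simply noticed a subtlety that the paper leaves unaddressed, and your attempted patch does not work.
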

\begin{proof}
Inequality~\eqref{valueRecoveryUpperEstimate} follows from Theorem~\ref{th::measuringDeviceDeviation}. The estimate from below under additional conditions of the theorem can be obtained as follows. For $x\in \tilde{X}$ (see~\eqref{lambdaSup}) consider the functions 
$$
f = \psi^{\chi_{i^*}}_+x+\psi^{\chi_{i^*}}_-x' \text{ and } g = \psi^{\chi_{i^*}}_+x'+\psi^{\chi_{i^*}}_-x,
$$
where $\psi^{\chi_{i^*}}$ is built according to~\eqref{psiQ} with $\chi$ substituted by $\chi_{i^*}$. Application of Lemma~\ref{l::errorOfRecoveryFromBelow} implies the required estimate from below under an appropriate choice of $x\in\tilde{X}$.
\end{proof}

If $\mu$ is a Borel measure on $T$, $Q_i\subset T$ are compacts with $\mu(Q_i) > 0$, $\chi_i = \chi_{Q_i}$,  $i=1,\ldots, n$, $\varphi(u) = \int_{T}u(s)d\mu(s)$,  $\lambda=P$ is the convexifying operator and $\Lambda_i(f) = \int_Tf(s)d\mu(s)$, then we obtain the following corollary.
\begin{corollary}\label{c::valueRecovery}
Let $t\in T$ be given. For the optimal recovery of the operator $\Lambda(f) = P(f(t))$ based on the information 
$$
 I(f) 
   = 
   \left(\frac{1}{\mu(Q_1)}\int_{Q_1}f(s)d\mu(s), \ldots, \frac{1}{\mu(Q_n)}\int_{Q_n}f(s)d\mu(s) \right), 
$$
one has
 \begin{equation}\label{valueRecoveryUpperEstimate2}
 \mathcal{E}(\Lambda, H^\omega(T,X), I, X)
 \leq 
 \min_{i=1,\ldots,n}\frac 1 {\mu (Q_i)}\int_{Q_i}\omega(r(t,s))d\mu(s).
\end{equation}
 If $X$ is isotropic, and condition~\eqref{xcXinv} holds, then inequality~\eqref{valueRecoveryUpperEstimate2} becomes equality. In this case the optimal method of recovery is
 $$
 \Phi^* (I(f))= \frac{1}{\mu(Q_{i^*})}\int_{Q_{i^*}}f(s)d\mu(s),
 $$ 
 where $i^*$ is such that the minimum in~\eqref{valueRecoveryUpperEstimate2} is attained on the index $i^*$.
\end{corollary}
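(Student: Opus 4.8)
The plan is to read the corollary off from the preceding theorem on optimal recovery of operator~\eqref{valueOperator}, applied with the specializations $\varphi(u)=\int_T u(s)\,d\mu(s)$, $\lambda=P$, $\Lambda_i(f)=\int_T f(s)\,d\mu(s)$ and $\chi_i=\chi_{Q_i}$, $i=1,\dots,n$. First I would remark that these data satisfy the standing hypotheses of that theorem: $\varphi$ is monotone, so $\varphi\in\mathcal{M}(T)$; $P$ is Lipschitz and, by the identity $P(\alpha x+\beta y)=\alpha P(x)+\beta P(y)$, positively homogeneous, so $P\in\mathcal{H}(X,X)$ (hence here $Y=X$); and each $\Lambda_i$, being integral operator~\eqref{integralLambda}, is of $(P,\varphi)$-type, as recorded among the examples of $(\lambda,\varphi)$-type operators. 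Since $\varphi(\chi_{Q_i})=\mu(Q_i)>0$ and $\varphi(\omega(r(t,\cdot))\chi_{Q_i}(\cdot))=\int_{Q_i}\omega(r(t,s))\,d\mu(s)$, the right-hand side of~\eqref{valueRecoveryUpperEstimate} becomes precisely the right-hand side of~\eqref{valueRecoveryUpperEstimate2}, which yields the stated upper bound.

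Next I would verify, one by one, the four additional hypotheses needed for the sharpness part of that theorem, under the assumptions that $X$ is isotropic and~\eqref{xcXinv} holds. Isotropy is assumed. With $\lambda=P$, set~\eqref{lambdaSup} coincides with $X^{\rm inv}\cap X^{\rm c}$, since Lemmas~\ref{l::convexifyingOperator} and~\ref{l::inverseIsConvex} show that $P$ restricts to the identity on $X^{\rm inv}\cap X^{\rm c}$; by~\eqref{xcXinv} I would take a nonzero element of $X^{\rm inv}\cap X^{\rm c}$ and normalize it (rescaling preserves invertibility and convexity) to an element $x\in X^{\rm inv}\cap X^{\rm c}$ with $h_X(x,\theta)=1$, and then observe, exactly as in the proof of Corollary~\ref{c::OstrowskiIneq}, that the constant sequence $\{x\}$ is $P$-extremal; hence~\eqref{lambdaSup} contains a $\lambda$-extremal sequence. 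That $\varphi$ is $\chi_{Q_{i^*}}$-averaging was recorded, for the Lebesgue integral over an arbitrary measurable set, in the discussion following Theorem~\ref{th::dualMeasuringDeviceDeviation}. Finally, $\Lambda_{i^*}$ satisfies~\eqref{LambdaXpm}: for integral operator~\eqref{integralLambda} this is exactly Lemma~\ref{l::LspaceValuedFunc}. With all four conditions checked, the preceding theorem gives equality in~\eqref{valueRecoveryUpperEstimate2}, and the optimal method is to take the $i^*$-th component of the information, $\Phi^*(I(f))=\frac{1}{\mu(Q_{i^*})}\int_{Q_{i^*}}f(s)\,d\mu(s)$, in accordance with Theorem~\ref{th::measuringDeviceDeviation}.

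Since the argument reduces to substituting concrete data into a result already at our disposal, I do not expect a genuine obstacle: the corollary is essentially bookkeeping, and the two ingredients with real content --- that the Lebesgue integral is $\chi_Q$-averaging and obeys~\eqref{LambdaXpm} --- were dispatched earlier. The one point deserving explicit mention is the identification $Y=X$, $\lambda=P$ together with the fact that $P$ is the identity on~\eqref{lambdaSup}, which collapses the required $\lambda$-extremal sequence to the constant sequence of a normalized element of $X^{\rm inv}\cap X^{\rm c}$; the availability of such an element is precisely what~\eqref{xcXinv} guarantees, and this is the hypothesis one cannot dispense with.
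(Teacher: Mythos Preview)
Your proposal is correct and follows exactly the route the paper intends: the corollary is stated immediately after the theorem on recovery of operator~\eqref{valueOperator} with precisely the specializations $\varphi(u)=\int_T u\,d\mu$, $\lambda=P$, $\Lambda_i=\int_T\cdot\,d\mu$, $\chi_i=\chi_{Q_i}$ listed there, and the paper offers no separate argument beyond pointing to the discussion after Theorem~\ref{th::dualMeasuringDeviceDeviation} for the $\chi_Q$--averaging property, for~\eqref{LambdaXpm}, and for the existence of a $P$-extremal sequence in~\eqref{lambdaSup}. Your verification of the hypotheses is in fact more explicit than what the paper writes.
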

  
\subsection{Recovery of the Convexifying Operator}
Finally, we consider the problem of optimal recovery  of the convexifying operator $\Lambda = P$ based on information operator~\eqref{meanValuesInformation} in the uniform norm.
\begin{theorem}
Let $T\subset \RR^d$, $d\in\NN$, be a convex compact set, $\mu$ be the Lebesgue measure and $r$ be the Euclidean metric. For the optimal recovery of the convexifying operator $P$ in the uniform norm based on the information given by~\eqref{meanValuesInformation},  one has
 \begin{equation}\label{functionRecoveryUpperEstimate}
 \mathcal{E}(P, H^\omega(T,X), I, B(T,X))
 \leq 
 \max_{t\in T} \min_{i=1,\ldots,n}\frac 1 {\mu B_i}\int\limits_{B_i}\omega(r(t,s))d\mu(s).
\end{equation}
 If $X$ is isotropic and~\eqref{xcXinv} holds, then inequality~\eqref{functionRecoveryUpperEstimate} becomes equality for all small enough radii $\varepsilon>0$. In this case the optimal method of recovery is
 $$
 \Phi^* (I(f))(t)
 =
 \sum_{i=1}^n\left(\frac{1}{\mu(B_i)}\int_{B_i}f(s)d\mu(s)\right)\chi_{T_i}(t),
 $$
 where the sets $T_i$ are defined in~\eqref{Ti}, $i=1,\ldots, n$.
\end{theorem}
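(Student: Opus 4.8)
The plan is to prove \eqref{functionRecoveryUpperEstimate} by verifying that the method $\Phi^*$ in the statement attains the right-hand side, and then to obtain the matching lower bound by reducing to the pointwise recovery problem already solved in Corollary~\ref{c::valueRecovery}. Throughout one works with $\varepsilon$ small enough that every closed ball $B_i=B(x_i,\varepsilon)$ is contained in $T$ (this, together with a geometric fact used below, is where the smallness of $\varepsilon$ enters); then each $B_i$ is a compact subset of $T$, all of them have the same positive Lebesgue measure, and all integrals below make sense.

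\emph{Upper estimate.} The sets $T_i$ from \eqref{Ti} are Borel, pairwise disjoint and cover $T$, so $\Phi^*$ is a well-defined method with values in $B(T,X)$. Fix $f\in H^\omega(T,X)$ and $t\in T$, and let $i$ be the unique index with $t\in T_i$; then $r(t,x_i)\le r(t,x_j)$ for all $j$, and $\Phi^*(I(f))(t)=\frac1{\mu B_i}\int_{B_i}f(s)\,d\mu(s)$. Applying Corollary~\ref{c::OstrowskiIneq} with $Q=B_i$ gives
$$
h_X\bigl(P(f(t)),\Phi^*(I(f))(t)\bigr)\le\frac1{\mu B_i}\int_{B_i}\omega(r(t,s))\,d\mu(s).
$$
The remaining point is to recognize the right-hand side as $\min_{j}\frac1{\mu B_j}\int_{B_j}\omega(r(t,s))\,d\mu(s)$; taking the supremum over $t$, and then over $f$, then yields \eqref{functionRecoveryUpperEstimate}.

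\emph{The key lemma (the main obstacle).} I would isolate as a lemma the statement that, for a fixed $t$ and fixed $\varepsilon>0$, the quantity $\int_{B(x,\varepsilon)}\omega(r(t,s))\,d\mu(s)$ depends on $x$ only through $r(t,x)$ and is non-decreasing in $r(t,x)$. The dependence only on $r(t,x)$ is the rotational invariance of the round ball. For monotonicity, after the substitution $s\mapsto t+u$ and a rotation one reduces to showing that $c\mapsto\int_{B(ce_1,\varepsilon)}\omega(\|w\|)\,d\mu(w)$ is non-decreasing on $[0,\infty)$; slicing the ball by the hyperplanes $\{w_1=\mathrm{const}\}$ reduces this to the one–dimensional fact that for an even function $\phi$ that is non-decreasing in $|\cdot|$ the integral $\int_{c-\ell}^{c+\ell}\phi$ is non-decreasing in $c\ge0$, which follows from the layer–cake representation of $\phi$ together with the observation that the overlap of the sliding interval $[c-\ell,c+\ell]$ with a fixed centered interval $[-a,a]$ is non-increasing in $c\ge0$. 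Since all the $B_i$ have equal measure, this lemma shows that the index $i$ with $x_i$ nearest to $t$ — i.e. the one with $t\in T_i$ — is exactly the one minimizing $\frac1{\mu B_j}\int_{B_j}\omega(r(t,s))\,d\mu(s)$, which is what the previous paragraph needs. (If a ball were clipped by $\partial T$ this monotonicity could fail, so the Voronoi assignment would no longer be optimal; this is a second reason to keep $\varepsilon$ small.)

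\emph{Lower estimate and conclusion.} For each fixed $t\in T$, uniform recovery of $P$ is no easier than recovering the single value $P(f(t))$: any method $\Phi$ with values in $B(T,X)$ yields, by evaluation at $t$, a method $I(f)\mapsto\Phi(I(f))(t)$ for the latter problem whose error does not exceed that of $\Phi$. Hence, writing $\Lambda_t(f)=P(f(t))$,
$$
\mathcal E\bigl(P,H^\omega(T,X),I,B(T,X)\bigr)\ \ge\ \mathcal E\bigl(\Lambda_t,H^\omega(T,X),I,X\bigr).
$$
Since $X$ is isotropic and \eqref{xcXinv} holds, Corollary~\ref{c::valueRecovery} (with $Q_i=B_i$) evaluates the right-hand side as $\min_{i}\frac1{\mu B_i}\int_{B_i}\omega(r(t,s))\,d\mu(s)$. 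Taking the supremum over $t\in T$ — which is attained at some $t^*$, as $t\mapsto\min_i\frac1{\mu B_i}\int_{B_i}\omega(r(t,s))\,d\mu(s)$ is continuous on the compact $T$ — gives the reverse of \eqref{functionRecoveryUpperEstimate}. Thus equality holds and $\Phi^*$ is an optimal method. The hard part of the argument is the monotonicity lemma of the third paragraph; the rest is assembling Corollaries~\ref{c::OstrowskiIneq} and~\ref{c::valueRecovery} together with the pointwise-to-uniform reduction.
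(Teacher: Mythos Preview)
Your argument is correct. For the upper estimate you and the paper take essentially the same route through the Ostrowski-type inequality; your monotonicity lemma (that $x\mapsto\int_{B(x,\varepsilon)}\omega(r(t,s))\,d\mu(s)$ is a non-decreasing function of $r(t,x)$) is a genuine addition, since the paper uses this fact tacitly both to justify that the Voronoi rule $\Phi^*$ is optimal and to identify $\frac{1}{\mu B_1}\int_{B_1}\omega(r(\overline{x},s))\,d\mu(s)$ with the right-hand side of~\eqref{functionRecoveryUpperEstimate}, but never isolates or proves it. The lower estimate is where the two proofs really differ. You reduce uniform recovery to the pointwise problem and invoke Corollary~\ref{c::valueRecovery}; this is short and modular, and needs only $B_i\subset T$. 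The paper instead builds an explicit extremal pair: it takes $\overline{x}$ and $R$ to be the center and radius of a largest ball in $T$ whose interior misses $x_1,\dots,x_n$, requires $\varepsilon$ small enough that each $x_i$ satisfies $r(x_i,\overline{x})=R$ or $r(x_i,\overline{x})\ge R+4\varepsilon$, and constructs a radial function $\Psi\in H^\omega(T,\RR)$ about $\overline{x}$ with $\int_{B_i}\Psi=0$ for every $i$ and $\Psi(\overline{x})=\frac{1}{\mu B_1}\int_{B_1}\omega(r(\overline{x},s))\,d\mu(s)$; then $\Psi_y$ and $\Psi_{y'}$ together with Lemma~\ref{l::errorOfRecoveryFromBelow} give the lower bound directly. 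The paper's route is more hands-on and exhibits the extremal functions, at the cost of the extra smallness condition on $\varepsilon$; your route is cleaner but packages all the work for the lower bound into Corollary~\ref{c::valueRecovery}.
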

\begin{proof}
Estimate~\eqref{functionRecoveryUpperEstimate} follows from Corollary~\ref{c::valueRecovery}. To obtain the estimate from below, we consider the case $X=\RR$ first.

In the set $T$ find a ball with maximal radius that does not contain points $x_1,\ldots, x_n$ in its interior. Let $\overline{x}$ be its center and $R$ be its radius. Let the radius $\varepsilon$ of the balls $B_i$, $i=1,\ldots, n$, be so small,  that for $i=1,\ldots, n$  either $r(x_i,\overline{x}) = R$, or $r(x_i,\overline{x}) \geq  R+4\varepsilon$.  Consider the univariate function
 $$
 \varphi (t)=
\begin{cases}
\omega (R+\varepsilon)-\omega(t),& 0\le t\le R+\varepsilon;\\
\varphi (2(R+\varepsilon) -t),& R+\varepsilon <t\le R+3\varepsilon;\\
\varphi(R-\varepsilon),& t>R+3\varepsilon.
\end{cases}
$$
For definiteness we assume that $r(x_1,\overline{x}) = R$. Define a function $\psi\colon \RR^d\to\RR$,
$$
\psi(s) = \varphi(r(\overline{x},s)) - \frac{1}{\mu (B_1)}\int_{B_1}\varphi(r(\overline{x},u))du.
$$
Note that $\psi(s_1) = \psi(s_2)$ provided $r(\overline{x},s_1) =r(\overline{x},s_2)$, and the set $\{r(\overline{x},s)\colon \psi(s) = 0\}$ contains exactly two numbers, both from the interval $(R-\varepsilon, R+3\varepsilon)$. Denote by $\overline{R}$ the larger of the two numbers and define the function $\Psi\colon \RR^d\to\RR$,
$$
\Psi(s) 
=
\begin{cases}
\psi(s),& r(\overline{x}, s)\leq \overline{R}, \\
0,& r(\overline{x}, s)> \overline{R}.
\end{cases}
$$
Since $\varphi\in H^\omega([0,R+3\varepsilon],\RR)$, we obtain that $\psi,\Psi\in H^\omega(T,\RR)$. Moreover, if $x\in\RR^d$ is such that either $r(\overline{x},x) = R$, or $r(\overline{x},x) \geq R+4\varepsilon$, then due to construction of $\Psi$,
$
\int_{\{r(u,x)\leq \varepsilon\}}\Psi(u)du = 0.
$
Thus $I(\Psi) = (0,\ldots, 0)$. Moreover, 
\begin{gather*}
\Psi(\overline{x}) 
= 
\varphi(0) - \frac{1}{\mu (B_1)}\int_{B_1}\varphi(r(\overline{x},u))du
\\=
\omega(R+\varepsilon) - \omega(R+\varepsilon) + \frac{1}{\mu( B_1)}\int_{B_1}\omega(r(\overline{x},u))du 
\\=
\max_{x\in T} \min_{i=1,\ldots, n}\frac 1 {\mu B_i}\int_{B_i}\omega(r(x,s))d\mu(s).
\end{gather*}
Finally, for $y\in X^{\rm c}\cap X^{\rm inv}$, $h_X(y,\theta) = 1$, consider the functions $\Psi_y$ and $\Psi_{y'}$. Due to Lemma~\ref{l::errorOfRecoveryFromBelow} they provide the required estimate from below.
\end{proof}

\bibliographystyle{elsarticle-num}
\bibliography{bibliography}

 \end{document}